\documentclass[11pt]{amsart}
\usepackage{amssymb,amsmath,mathtools,bbm,tikz,color}
\usepackage{float}
\usepackage{verbatim}
\usepackage{url}
\usepackage[letterpaper, bookmarks=true,%
    colorlinks=true,%
    linkcolor=blue,%
    citecolor=blue,%
    urlcolor=blue,%
    breaklinks=true]{hyperref}
\usepackage{combelow}
\usepackage{bbm}
\usepackage{graphicx} 
\usepackage[english]{babel} 
\usepackage{float}
\usepackage{mathtools}
\usepackage{hyperref}
\usepackage[noadjust]{cite}
\usepackage[margin=3.2cm]{geometry}
\hypersetup{ 
    colorlinks=true,       
    linkcolor=blue,          
    citecolor=blue,        
    filecolor=blue,      
    urlcolor=blue           
}

\allowdisplaybreaks

\def\C{\mathbb{C}}

\def\Z{\mathbb{Z}}
\def\R{\mathbb{R}}
\def\Q{\mathbb{Q}}
\def\be{\begin{equation}}
\def\ee{\end{equation}}
\def\ot{\otimes}
\def\t{\times}
\def\sb{\subset}
\def\a{\alpha}

\def\d{\delta}

\def\S{\Sigma}
\def\De{\Delta}
\def\ov{\overline}
\def\wt{\widetilde}
\def\id{\mathrm{id}}
\def\fsl{\mathfrak{sl}}

\def\U{U_q(\fsl(2|1))}
\def\UQ{U_{\mathbb{Q}}}
\def\HH{\mathcal{H}}

\def\CC{\mathcal{C}}
\def\TS{T_{\Sigma}}
\def\FF{\overline{F}}
\def\Vna{V(n,\a)}
\def\Bna{\mathcal{B}_{n,\a}}
\def\rev{\overrightarrow{\mathrm{ev}}}
\def\lev{\overleftarrow{\mathrm{ev}}}
\def\lcoev{\overleftarrow{\mathrm{coev}}}
\def\coev{\mathrm{coev}}

\usepackage{array}
  
\usepackage[shortlabels]{enumitem}
\usepackage{pstricks} 
\usepackage{pst-all}
\usepackage{epsfig}
\usepackage{pst-grad} 
\usepackage{pst-plot} 
\usepackage[space]{grffile} 
\usepackage{etoolbox} 
\usepackage{color} 
\makeatletter 
\patchcmd\Gread@eps{\@inputcheck#1 }{\@inputcheck"#1"\relax}{}{}
\makeatother

\usepackage[all]{xy}
\usepackage{mathrsfs}
\usepackage{graphics}
\usepackage{amsthm}

\theoremstyle{plain}\newtheorem{theorem}{Theorem}[section]\newtheorem{Theorem}{Theorem}\newtheorem{proposition}[theorem]{Proposition}\newtheorem{lemma}[theorem]{Lemma}

\theoremstyle{definition}\newtheorem{remark}[theorem]{Remark}

\def\Q{\mathbb{Q}}\def\C{\mathbb{C}}\def\Z{\mathbb{Z}}\def\R{\mathbb{R}}\def\ZZ2{\mathbb{\Z/ 2\Z}}
\def\sb{\subset}\def\ot{\otimes}\def\t{\times}\def\sm{\setminus}
\def\a{\alpha}\def\d{\delta}\def\e{\epsilon}\def\De{\Delta}\def\La{\Lambda}\def\la{\lambda}\def\la{\lambda}\def\p{\partial}\def\S{\Sigma}
\def\ov{\overline}\def\wt{\widetilde}

\def\sl2{\mathfrak{sl}_2}
\def\su2{\mathfrak{su}(2)}
\def\tr{\text{tr}\,}
\def\id{\text{id}}\def\End{\text{End}}

\def\deg{\text{deg}}

\def\Uq{U_q(\mf{sl}_2)}\def\Usl2{U_q(\sl2)}\def\usl2{\wt{U}_q(\sl2)}\def\Uq{U_q(\mf{sl}_2)}\def\uq{\mathfrak{u}_q(\sl2)}

\def\CC{\mathcal{C}}

\def\ZZ{\mathcal{Z}}

\def\CC{\mathcal{C}}

\def\HH{H=\{\Ha\}_{\a\in G}}

\def\Z{\mathbb{Z}}\def\gg{\mathfrak{g}}
\def\C{\mathbb{C}}
\def\ot{\otimes}\def\id{\text{id}}

\def\sl{\mathfrak{sl}}

\def\CC{\mathcal{C}}

\def\rev{\overrightarrow{\text{ev}}}

\def\rcoev{\overrightarrow{\text{coev}}}
\def\lev{\overleftarrow{\text{ev}}}

\def\lcoev{\overleftarrow{\text{coev}}}

\def\sl{\mathfrak{sl}}

\def\sll{\mathfrak{sl}(2|1)}
\def\uq{\mathfrak{u}_q(\sl)}
\def\HH{\mathcal{H}}

\def\top{\mathrm{top}_{\text{LG}}}

\def\HH{\mathcal{H}}

\def\Va{V_{\a}}
\def\rev{\overrightarrow{\text{ev}}}

\def\rcoev{\overrightarrow{\text{coev}}}
\def\lev{\overleftarrow{\text{ev}}}

\def\lcoev{\overleftarrow{\text{coev}}}

\def\coev{\text{coev}}

\def\XX{\mathbb{X}}

\def\CCgr{\CC_{\gg,r}}

\def\Imax{I_{max}}
\def\CCgr{\CC_{\gg,r}}

\def\TS{T_{\Sigma}}

\def\Imax{I_{max}}

\def\uqg{\mathfrak{u}_q(\gg)}

\def\Uqsl{U_q(\sl(2|1))}

\def\ka{\kappa}
\def\psl{\mathfrak{psl}}

\title[The top coefficient of Links-Gould is Murasugi-sum multiplicative]{The top coefficient of the Links-Gould invariant is Murasugi-sum multiplicative}

\author[Cristina Anghel]{Cristina Anghel}
\address{Laboratoire de math\'ematiques Blaise Pascal, Universit\'e Clermont-Auvergne, Campus des C\'ezeaux 3, place Vasarely, 63178 Aubi\`ere, France; Institute of Mathematics ``Simion Stoilow'' of the Romanian Academy, 21 Calea Grivi\cb{t}ei Street, 010702 Bucharest, Romania.}
\email{cristina.anghel@uca.fr; cranghel@imar.ro}

\author[D. López Neumann]{Daniel López Neumann}
\address{Universidad de Concepción, Concepción, Chile}
\email{dlopezn@udec.cl}

\author[R. van der Veen]{Roland van der Veen}
\address{University of Groningen, Netherlands}
\email{r.i.van.der.veen@rug.nl}

\begin{document}

\maketitle

\begin{abstract}

We prove that the Laurent polynomial in $\Z[q^{\pm 1}]$ that is the top coefficient of the Links-Gould invariant of the boundary of a Seifert surface is multiplicative under arbitrary Murasugi sums of minimal genus Seifert surfaces. This extends a previous result of the second and third authors which only holds for plumbing, i.e. 4-Murasugi sum. Our technique relies on quantum supergroup themselves and PBW basis rather than the skein relations used in the previous work.


\end{abstract}

\section{Introduction}

The Links-Gould invariant $LG(L;q,p)$ is an isotopy invariant of links $L\sb S^3$ that lives in $\Z[q^{\pm 1},p^{\pm 1}]$ \cite{LG:two-variable}. This is defined as a Reshetikhin-Turaev invariant \cite{RT1} through the representation theory of the quantized Lie superalgebra of $\sl(2|1)$, which is a ribbon tensor category \cite{Yamane:quantized-superalgebras-R-matrices, Khoroshkin-Tolstoi:universal-R-matrix-superalgebras}. The variable $q$ in $LG(L;q,p)$ is the usual quantum group variable, however, the second variable $p=q^{2\a+1}$ comes from a unique super-Lie phenomenon: it is related to ``non-discreteness'' in the category of representations, namely, the existence of a family of finite-dimensional simple modules $V_{\a}$ with non-integral highest weight $\a\in \C\sm\Z$. This is a phenomenon that happens precisely for the class of ``type I'' Lie superalgebras in Kac's classification (except for $\psl(n|n)$) \cite{Kac:Lie-superalgebras} and marks a sharp contrast with the representation theory of simple complex Lie algebras. In particular, this second variable has a completely different origin than that of the HOMFLY and Kauffman polynomials. 
\medskip

In a series of recent works \cite{KT:Links-Gould, LNV:plumbing-multiplicative, GLN:genus-for-Lie-superalgebras} it has been shown that the variable $p=q^{2\a+1}$, not only has a different origin than that of HOMFLY or Kauffman, but it also has a much different (and better) geometric behaviour: the $p$-degree (or breath) is bounded above by four times the Seifert genus \cite{KT:Links-Gould, GLN:genus-for-Lie-superalgebras}, the coefficient of the top $p$-power of a fibred link is a power of $q$ determined by the Hopf invariant and, more generally, the same top coefficient is multiplicative with respect to plumbing of (minimal genus) Seifert surfaces \cite{LNV:plumbing-multiplicative}. 
\medskip

In this paper, we generalize the main result of \cite{LNV:plumbing-multiplicative} from ``plumbing'' to arbitrary Murasugi-sum. Recall that Murasugi-sum is a way to glue two embedded surfaces in 3-space along an embedded polygon on each surface (see Fig. \ref{fig:argument-1}, left). A deep theorem of Gabai states that a Murasugi-sum has minimal genus if and only if each surface in the sum does \cite{Gabai:Murasugi1, Gabai:Murasugi2}. Here we prove:

\begin{Theorem}
\label{Theorem: main theorem for Links-Gould}
    If a link $L$ is obtained from links $L_1$ and $L_2$ by Murasugi-sum along genus-minimizing Seifert surfaces, then $$\top(L,q)=\top(L_1,q)\cdot\top(L_2,q).$$
\end{Theorem}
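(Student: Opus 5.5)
We prove Theorem \ref{Theorem: main theorem for Links-Gould} by computing the Links-Gould invariant from a Seifert surface rather than from a diagram. Take a minimal genus Seifert surface $\S$ for $L$ that is the Murasugi sum of $\S_1$ and $\S_2$ (minimal genus for $L_1$, $L_2$) along a $2n$-gon $P$. We present $\S$ as a disk with bands attached, in the spirit of \cite{GLN:genus-for-Lie-superalgebras}. Then $LG(L)$ becomes a partial trace, over $V_\a$-colored tangles, of a product of contributions: one contribution for each band, with the universal $R$-matrix of $U_q(\sl(2|1))$ acting along the band cores. The disk is arranged so that $P$ sits at its center. The bands of $\S_1$ then attach on one side (above) and those of $\S_2$ on the other side (below), interleaved around $\partial P$ according to the $2n$ alternating arcs.

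First, we expand the universal $R$-matrix in a PBW basis of the odd part. Each band contributes a sum over PBW monomials in the odd root vectors $E_{\beta}$, $F_\beta$, where each odd generator is nilpotent of order two. The $p$-degree of a term is governed by how many odd generators appear. The genus bound $\deg_p\le 4g$ is saturated exactly by the terms in which every band carries its maximal odd monomial (the ``top'' part), as in the degree argument of \cite{GLN:genus-for-Lie-superalgebras}. Consequently, $\top(L)$ equals the trace of the product of these top components, evaluated in a suitable associated graded or limit $\a\to\infty$.

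Second, we show that the top components localize. Near the polygon $P$ we have a tensor product of copies of $V_\a$, and the top odd monomials act through the one-dimensional top and bottom weight spaces of the exterior-algebra structure of $V_\a$ restricted to the odd part. These weight spaces are the images of $\prod F_\beta$ and $\prod E_\beta$. The key claim is that, in top $p$-degree, the operator assigned to the $\S_1$ side and the operator assigned to the $\S_2$ side each factor through a rank-one projector on the arcs of $\partial P$. The reason is that the $\S_1$ bands supply only $E$-type top monomials on the arcs they touch, while the $\S_2$ bands supply only $F$-type ones on the complementary arcs. Hence the trace over the whole disk splits as a product of two traces. One of them is the top-degree computation for $\S_1$ (with $\S_2$ replaced by a disk) and the other is the one for $\S_2$. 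This gives $\top(L)=\top(L_1)\top(L_2)$, up to a unit $\pm q^k$. The unit is then checked to be $1$ by comparing framing and normalization factors: the contribution of the polygon depends only on $n$, and it cancels against the normalization of the two summands.

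The main obstacle is the localization step for a general $2n$-gon. For plumbing ($n=2$) there are only two arcs on each side. For general $n$, the alternating arcs create cross terms: commutators of $E$'s from $\S_1$ with $F$'s from $\S_2$ across the same vertex of $P$. A priori these could contribute in top degree. To rule them out, I would first try a weight/grading filtration. Assign to each PBW term its total odd degree. Commuting an $E_\beta$ past an $F_\beta$ yields Cartan terms of strictly lower odd degree, hence strictly lower $p$-degree. So modulo lower order, the $\S_1$ and $\S_2$ operators supercommute, and the factorization follows. Making this rigorous requires a careful check of how the $p$-degree is read off from Cartan weights on $V_\a$, and of the signs coming from super-commutation.
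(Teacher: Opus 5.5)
Your first step, keeping only the PBW terms in which every band carries its top odd monomial via the algebraic degree bound, matches the paper's. The localization step, however, is the heart of the proof, and it is not established; the reason you give for it is also incorrect.

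\textbf{Why the localization fails as stated.} The universal invariant of the bottom tangle has root-lattice degree zero. The factor $E_{12}E_2$ has $\a_2$-coefficient $2$, and this can only be cancelled by $F_{12}F_2$ in the same tensor factor. So in top degree every band, of $\S_1$ and of $\S_2$ alike, carries $f_i(K,E_1,F_1)F_{12}F_2E_{12}E_2$. There is no splitting into $E$-type bands on one side and $F$-type bands on the other.

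Moreover, a band is colored by $X=\Va\ot\Va^*$ (its two edges), and the coproduct spreads the odd generators over both factors. So the one-dimensionality of the extreme weight spaces of $\Va$ does not give what you need.

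The missing ingredient is a cutting lemma on $X$:
$F_{12}F_2E_{12}E_2|_X=\ka(\a)\,\lcoev_{\Va}\circ\rev_{\Va}+\ov{B}$, with $\ov{B}$ of $q^{-2\a}$-degree $<2$. This is a genuine computation, done by computer in the paper. The rank-one map $\lcoev_{\Va}\circ\rev_{\Va}$ factors through the unit object, and its image is the invariant vector of $X$, not an extreme weight vector. It is also not a projector: it squares to zero because $\Va$ has vanishing quantum dimension.

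Combined with $f_i\circ\lcoev_{\Va}=\e(f_i)\lcoev_{\Va}$ and $(\iota_X\ot(\lcoev_{\Va}\circ\rev_{\Va}))\circ\rcoev_X=\lcoev_{\Va}\ot\lcoev_{\Va}$, this yields $\FF(T_{\S_1},X)=c\cdot\lcoev_{\Va}^{\ot 2N}+B$. In words, in top degree all bands of $\S_1$ are cut.

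\textbf{Why the cross-term remedy does not help, and what is needed.} Even if the two sides supercommuted modulo lower order, the trace of a product of commuting operators is not the product of their traces. Also, on $\Va$-colored strands the Cartan terms are not of lower degree: $E_2F_2+F_2E_2$ acts on $v_0$ by $(q^{\a}-q^{-\a})/(q-q^{-1})$. The paper's degree bound is a statement about the action on $X$, in the rearranged order $E_1^aF_1^bF_{12}^{\e_1}F_2^{\e_2}E_{12}^{\e'_1}E_2^{\e'_2}$.

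In fact no commutation is needed. Put $\S_1$ below the polygon and $\S_2$ above it, and cut $L$ open at a point of $\p P$ away from the bands. Then $F(L_o,\Va)$ is literally $(\id\ot\FF(T_{\S_2},X'))\circ I\circ(\FF(T_{\S_1},X)\ot\id)$, with $I$ a layer of (co)evaluations.

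What you must supply instead is degree bookkeeping:
\begin{enumerate}[(i)]
\item asymmetric bounds on the remainders, namely $q^{-2\a}$-degree $<N$ but $q^{2\a}$-degree $\le N$ for $B$, and the reverse for $B'$, so that $(\id\ot B')I(B\ot\id)$ has degree $<N+N'$ in both directions;
\item bounds on the closures of $B$ and $B'$, to control $c'\cdot\mathrm{cl}(B)$ and $c\cdot\mathrm{cl}(B')$;
\item the $p\mapsto p^{-1}$ symmetry of $\ka(\a)$, to match both extreme coefficients of $c$ with those of $LG(L_1)$.
\end{enumerate}
With the Seifert framing (writhe zero) this gives exact equality, so there is no unit or polygon-dependent normalization to check.

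\textbf{Disconnected surfaces.} The theorem allows disconnected genus-minimizing surfaces, and a disk-with-bands picture does not cover them. The paper reduces to the connected case by induction on the number of components. It uses $2\top(\p\S)=q\top(\p\S^{(0)})-(q^{-1}+q)\top(\p\S^{(1)})+q^{-1}\top(\p\S^{(2)})$, where $\S^{(k)}$ is $\S$ with a $k$-twisted band attached between two components away from the polygon.
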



We now explain the meaning of $\top(L,q)$. As shown in \cite{KT:Links-Gould} (and slightly generalized to disconnected $\S$ in \cite{LNV:plumbing-multiplicative}), the Links-Gould invariant satisfies the bound 
\begin{equation}
\label{eq: bound for LG}
    \deg_pLG(L;q,p)\leq 2(1-\chi(\S))
\end{equation}
where $\S$ is any compact, oriented surface with $\p\S=L$ and no closed components. A surface satisfying these conditions and minimizing $-\chi(\S)$ is said to be genus-minimizing. Then we define $$\top(L,q) := \text{ coefficient of } p^{1-\chi(\S)} \text{ in } LG(L;q,p)$$
for any genus-minimizing Seifert surface $\S$ of $L$.
For knots, this is simply the coefficient of $p^{2g(K)}$ where $g(K)$ is the Seifert genus. Note that this coefficient is zero if the bound (\ref{eq: bound for LG}) is not sharp, for instance, for the Conway knot.
\medskip

\def\AAm{\mathcal{A}_M}
\def\AAp{\mathcal{A}_{pl}}

As a corollary of our theorem, we can (in principle) improve Corollary 2 and 3 of \cite{LNV:plumbing-multiplicative}. For each non-zero $n\in\Z$, let $A_n$ be an unknotted annulus with $n$ full twists (where $n>0$ means left-handed twists). 
Let $\AAm$ (resp. $\AAp$) be the class of links that bound surfaces obtained by iteratively Murasugi-summing or desumming $A_n$'s with $n\neq 0$ (resp. plumbing/deplumbing). \cite[Corollary 2]{LNV:plumbing-multiplicative} gives an obstruction for a link to belong to the class $\AAp$, and \cite[Corollary 3]{LNV:plumbing-multiplicative} establishes ``plumbing-uniqueness'' for links in this class. By our main theorem, both results hold if $\AAp$ is replaced by $\AAm$. However, we do not know whether $\AAp\neq \AAm$.

\medskip

An analogous Murasugi-sum multiplicativity property has been shown by the second and third authors in \cite{LNV:Murasugi-sum} for the (top coefficients of) non-semisimple link polynomials defined through unrolled quantum groups at roots of unity \cite{CGP:non-semisimple} (as long as Seifert surfaces are connected). The top coefficients considered in \cite{LNV:Murasugi-sum} are cyclotomic integers, while here they are Laurent polynomials in $q$. In this sense, we think quantum supergroups give neater results. 
Now, the root-of-unity quantum group case has the advantage that small quantum groups are at hand, and these are finite-dimensional Hopf algebras (often ribbon), hence they have cointegrals and universal invariants of bottom tangles are defined (see Remark \ref{remark: cutting lemma for root of 1 case is easier}). This is why the result of \cite{LNV:Murasugi-sum} holds for all Lie types while the present one is for a single quantum supergroup. 

\medskip
It is interesting to note that the Murasugi-sum statement does not hold for the higher Links-Gould invariants $LG^{(n)}$, that is, the analogous invariants but using the $U_q(\sl(2|1))$-modules $V(n,\a)$ with $n\geq 1$ instead of $V(0,\a)$. This can be seen from simple computations, e.g. Hopf links have $q$-monic $LG^{(1)}$, but that's not the case for the trefoil and figure eight (see e.g. \cite{Garoufalidis-Kashaev:multivariable} for the $V_2$-polynomial which conjecturally coincides with $LG^{(1)}$). 
\medskip

We now give a few comments concerning our theorem and available data on Links-Gould. Indeed, our results explain the many coincidences seen in tables of computations of Links-Gould invariants, for instance \cite{de-Wit:automatic-evaluation-LG} or the more recent and powerful \cite{Garoufalidis-Li:patterns-of-V2}. Many prime knots share the same top coefficients of LG and often this is because such knots are made by Murasugi-summing the same links, but in a different way. For instance, sorting the data of \cite{Garoufalidis-Li:patterns-of-V2}, we found that there are 510 knots (resp. $360$ knots) with $\leq 12$ crossings having top coefficient a power of $q$ times $1+3q^2$ (resp. $3+q^2$). The link $H_2=\p A_2$ ($A_2$ as above) has $\top(H_2)=3q+q^3$ so these 510 knots (resp. 360) are candidates to be Murasugi-sums of the mirror image $m(H_2)$ (resp. $H_2$) with a bunch of Hopf links, but we haven't checked this. 
\medskip

It is interesting to note that the top coefficient of the HOMFLY polynomial has a similar property, though much more restrictive and less topological: it is multiplicative under ``diagrammatic'' Murasugi-sum of Seifert surfaces obtained by Seifert's algorithm, and top coefficient stands for the coefficient of $z^{g_c}$ where $g_c$ is the canonical genus \cite{MP:skein-star-product}. Small prime knots (at least for $\leq 9$ crossings) that are Murasugi-sums of smaller links are in fact diagrammatic Murasugi sums. Thus, coincidences for the top coefficient of Links-Gould of small knots (which are explained through Murasugi-sum) also hold for the top coefficient of HOMFLY. However, this is just a small-knot phenomenon, and stops being true for knots with higher crossing number (for instance, as noted in \cite{LNV:Murasugi-sum}, $11n74$ is the first fibred knot whose HOMFLY is not $v$-monic). For the 510 knots mentioned in the previous paragraph, we checked that, up to a power of $\pm v$, 472 of them have top HOMFLY coefficient $1+v^2$ and 38 of them have top coefficient $2$. 
\medskip

\subsection{Plan of the paper} Section \ref{section: quantized Lie super} contains the definition of quantum $\sl(2|1)$ and its category of modules. We start from scratch, since it is difficult to find references on Lie superalgebras and its quantized versions that are appropriate for geometric topologists. In Section \ref{section: Links-Gould invariant} we recall the definition of the Links-Gould invariant (only assuming knowledge of the Reshetikhin-Turaev operator invariants). We prove our main theorem in Section \ref{section: proof of main theorem}. On a first reading, we suggest going through Subsection \ref{subs: proof connected case} assuming Proposition \ref{prop: prop surface and coevaluations}.

\subsection{Acknowledgements} We thank Nathan Geer for email communication. The second author thanks Noah Snyder for interesting conversations at the beginning of this project.
The first author gratefully acknowledges the support of the ANR grant ANR-24-CPJ1-0026-01 at Universit\'e Clermont Auvergne - LMBP. She also acknowledges partial support from grants provided by the Ministry of Research, Innovation and Digitisation, CNCS - UEFISCDI, project numbers  PN-IV-P2-2.1-TE-2023-2040 and PN-IV-P1-PCE-2023-2001, within PNCDI IV. The second author acknowledges support from Fondecyt INI 11261601. AI was used to write down part of the preliminaries (under the prompt ``specialize \cite{GLN:genus-for-Lie-superalgebras} to $\sl(2|1)$''), which were further checked, cleaned and expanded by the authors. It was also used to organize the data of \cite{Garoufalidis-Li:patterns-of-V2}. On the math side, AI was only used to finish the argument of Lemma \ref{prop: rearranged PBW}.

\section{Quantized Lie super-algebras} 
\label{section: quantized Lie super}

\def\SVect{\text{SVect}}
\def\Uqp{U_q(\mathfrak{p}^+)}
\def\ovz{\ov{0}}
\def\ovo{\ov{1}}

\subsection{Super-vector spaces} Recall that a super-vector space is a vector space $V$ endowed with a mod 2 grading $V=V_{\ovz}\oplus V_{\ovo}$ where vectors in $V_{\ovz}$ are said to be {\em even} and vectors in $V_{\ovo}$ are {\em odd}. Morphisms between super-vector spaces are degree-preserving. The category of super-vector spaces $\SVect$ is a symmetric tensor category, where the symmetry is given by $\tau_{U,V}(u\ot v)=(-1)^{|u||v|}v\ot u$ for homogeneous $u\in U,v\in V$. Thus, one can define algebras, Lie algebras, Hopf algebras in the category of super-vector spaces; referred as superalgebras, Lie superalgebras or Hopf superalgebras. Moreover, one can define modules over such objects inside $\SVect$, this is what we consider in this paper without any further comment.

\def\fsl{\mathfrak{sl}}
\subsection{The Lie superalgebra $\fsl(2|1)$} Let $\C^{2|1}$ be a 3-dimensional complex mod 2 graded vector space, with even part $\C_{\ov{0}}=\C^2$ and odd part $\C_{\ov{1}}=\C$. We define $\sl(2|1)$ to be the subalgebra of $\End(\C^{2|1})$ consisting of endomorphisms with zero super-trace (defined as $\tr(\pi_0fj_0)-\tr(\pi_1f j_1)$ where $j_i:\C_{\ov{i}}\to \C^{2|1}, \pi_i:\C^{2|1}\to \C_{\ov{i}}$, $i=0,1$ are inclusions/projections). Here $\End(\C^{2|1})$ denotes the space of all endomorphisms, and it is mod 2 graded if we say that $f$ is (homogeneous) of degree $i$ (mod 2) if $f(\C_{\ov{j}})\sb \C_{\ov{j+i}}$ for all $j$ (mod 2). With the bracket $[f,g]=fg-(-1)^{|f||g|}gf$, $\fsl(2|1)$ becomes a super-Lie algebra.
\medskip

The universal enveloping (super)algebra of $\sl(2|1)$ can be described as the algebra with generators $H_1,H_2,E_1,E_2,F_1,F_2$ which are all even except $E_2,F_2$ which are odd, together with the relations 
\begin{align*}
    [H_i,E_j]& =a_{ij}E_j, & [H_i,F_j]&=-a_{ij}F_j & [H_i,H_j]&=0 \\
    [E_i,F_j]&=\d_{ij}H_i & [E_2,E_2]&=0 & [F_2,F_2]&=0 \\
    [E_1,[E_1,E_2]]&=0 & [F_1,[F_1,F_2]]&=0 & 
\end{align*}
Here $(a_{ij})$ is the $2\t 2$ Cartan matrix given by $a_{11}=2, a_{12}=a_{21}=-1$ and $a_{22}=0$. If we set $E_{12}=[E_1,E_2]$ and $F_{12}=[F_1,F_2]$, then these elements have degree one and $E_{12}^2=F_{12}^2=0$ in $U(\fsl(2|1))$. In particular, if $V$ is any $U(\sll)$-module, then the operators $E_2:V\to V$ and $E_{12}:V\to V$ square to zero (similarly for $F_2,F_{12}$).

\medskip
As usual, the action of the Cartan subalgebra (= diagonal matrices) determines a root-space decomposition. In this case, there are 2 simple roots $\a_1,\a_2$ and a positive non-simple root $\a_1+\a_2$. The root $\a_1$ is even while $\a_2,\a_1+\a_2$ are odd. The Weyl vector is $2\rho=2\rho_{\ov{0}}-2\rho_{\ov{1}}$ where $2\rho_{\ov{i}}$ is the sum of roots of degree $\ov{i}$. For $\fsl(2|1)$ this is $2\rho_{\ov{0}}=\a_1, 2\rho_{\ov{1}}=\a_1+2\a_2$ so that $2\rho=-2\a_2$. The root lattice $L_R$ is the lattice generated by $\a_1,\a_2$, this has a non-degenerate pairing given by $(\a_i,\a_j)=a_{ij}$, the Cartan matrix.

\subsection{Quantum $\fsl(2|1)$} Let $q$ be a variable. The quantum supergroup $U_q(\fsl(2|1))$ is the
$\C(q)$-algebra with generators $E_1,E_2,F_1,F_2,K_1^{\pm1},K_2^{\pm1}$, where $E_2,F_2$
are odd and all other generators are even, and relations
\begin{align*}
K_iE_j&=q^{a_{ij}}E_jK_i, & K_iF_j&=q^{-a_{ij}}F_jK_i, & K_1K_2&=K_2K_1, \quad
K_iK_i^{-1}=1, \\
[E_i,F_j]&=\d_{ij}\frac{K_i-K_i^{-1}}{q-q^{-1}}, & E_2^2 &= F_2^2 = 0, & &
\end{align*}
where the Cartan matrix $(a_{ij})$ is the one of $\fsl(2|1)$, together with the $q$-Serre relations
\be
E_1^2E_2 - (q+q^{-1})E_1E_2E_1 + E_2E_1^2 = 0, \qquad
F_1^2F_2 - (q+q^{-1})F_1F_2F_1 + F_2F_1^2 = 0 \,.
\ee
Here $[x,y]=xy-(-1)^{|x||y|}yx$ denotes the super-commutator. This is a Hopf superalgebra with
\begin{align*}
\De(E_i)&=E_i\ot K_i+1\ot E_i, & \De(F_i)&=K_i^{-1}\ot F_i+F_i\ot 1,& 
 \De(K_i)&=K_i\ot K_i, \\
S(E_i)&=-E_iK_i^{-1}, & S(F_i)&=-K_iF_i, & S(K_i)&=K_i^{-1} \,.
\end{align*}
We denote by $\UQ^+$ (resp. $\UQ^-$) the $\Q(q)$-subalgebra generated by $E_1,E_2$
(resp. $F_1,F_2$), by $\UQ^0$ the $\Q(q)$-subalgebra generated by
$K_1^{\pm1},K_2^{\pm1}$, and by $\UQ$ the $\Q(q)$-subalgebra generated by all of the
generators. The quantum group is graded by the root lattice if we say that $E_i$ has degree $\a_i$, $F_i$ has degree $-\a_i$ and each $K_i$ has degree zero.


\subsection{Root vectors and PBW bases} 
We define the root vectors
\be
\label{rootvectors}
\quad E_{12} := E_1E_2 - q^{-1}E_2E_1,
\qquad
F_{12} := F_1F_2 - q^{-1}F_2F_1 \,
\ee
which are both odd. 
The following relations are easily checked to hold in $U_q(\fsl(2|1))$:
\begin{align*}
E_1E_{12} &= q\, E_{12}E_1, & E_2E_{12} &= -q\, E_{12}E_2, & E_{12}^2 &= 0, \\
F_1F_{12} &= q\, F_{12}F_1, & F_2F_{12} &= -q\, F_{12}F_2, & F_{12}^2 &= 0, &  \,
\end{align*}


By \cite{Yamane:quantized-superalgebras-R-matrices}, the monomials
\be
\label{PBW}
E^J := E_1^{j_1}E_{12}^{j_3}E_2^{j_2}, \qquad 
F^J := F_1^{j_1}F_{12}^{j_3}F_2^{j_2}, \qquad j_1 \in \Z_{\geq 0}, \,\, j_2,j_3\in\{0,1\},
\ee
form $\Q(q)$-bases of $\UQ^+$ and $\UQ^-$, respectively, and the products
\be
\label{PBW basis usual order}
K_1^{a_1}K_2^{a_2}F^JE^I
\ee
(for all possible $I,J$ and $a_1,a_2\in \Z$) form a $\Q(q)$-basis of $\UQ$, this is the usual PBW basis. 
The proof of our main theorem will use the following rearrangement of the PBW basis: 

\begin{lemma}
\label{prop: rearranged PBW}
    The quantum supergroup $U_q(\sll)$ has basis the products of the form $$K_1^{a_1}K_2^{a_2}E_1^{a}F_1^b F_{12}^{\e_1}F_{2}^{\e_2}E_{12}^{\e_1'}E_{2}^{\e_2'}$$
    where $a_1,a_2\in \Z, a,b\in \Z_{\geq 0}$ and $\e_1,\e_2,\e'_1,\e'_2\in \{0,1\}$.
\end{lemma}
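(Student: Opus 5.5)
Our plan is to compare the proposed family with the usual PBW basis \eqref{PBW basis usual order} by means of a triangularity argument with respect to a filtration, working over $\Q(q)$ as before.

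\textbf{Step 1: the factor $E_1^aF_1^b$.} Let $U_1\subset \UQ$ be the subalgebra generated by $E_1,F_1,K_1^{\pm1}$; it is a copy of $U_q(\sl_2)$. The classical PBW theorem for $U_q(\sl_2)$ in the order $K E F$ says that $\{K_1^{c}E_1^aF_1^b\}$ is a basis of $U_1$. Using the commutation relation
\[
[E_1,F_1]=\frac{K_1-K_1^{-1}}{q-q^{-1}},
\]
one passes between the orders $F_1^bE_1^a$ and $E_1^aF_1^b$. The difference between the two orderings lies in the span of monomials $K_1^cF_1^{b-k}E_1^{a-k}$ with $k\ge 1$.

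\textbf{Step 2: reordering the remaining factors.} Next we would use the $\Z^2$-grading by the root lattice, together with the relations already listed in the paper:
\[
E_1E_{12}=qE_{12}E_1,\qquad F_1F_{12}=qF_{12}F_1,
\]
and the relations for $E_2$ and $F_2$. Take a product $E_1^aF_1^bF_{12}^{\e_1}F_2^{\e_2}E_{12}^{\e_1'}E_2^{\e_2'}$ and move the odd $E$-part $E_{12}^{\e'_1}E_2^{\e'_2}$ leftward past $F_{12}^{\e_1}F_2^{\e_2}$ and then past $F_1^b$, so as to reach the usual order $F^J E^I$. The needed commutators are
\[
[E_2,F_2]=\frac{K_2-K_2^{-1}}{q-q^{-1}},\qquad [E_2,F_1]=0,\qquad [E_1,F_2]=0,
\]
together with the commutators of $E_{12}$ with $F_1$, $F_2$, $F_{12}$. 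These last ones are computed from the definition $E_{12}=E_1E_2-q^{-1}E_2E_1$, and each yields terms of strictly lower total $E$-degree times $K$'s. Consequently
\[
K^{\mathbf a}E_1^aF_1^bF_{12}^{\e_1}F_2^{\e_2}E_{12}^{\e_1'}E_2^{\e_2'}
= K^{\mathbf a}F_1^bF_{12}^{\e_1}F_2^{\e_2}E_1^aE_{12}^{\e_1'}E_2^{\e_2'}+(\text{lower terms}).
\]
Here "lower" means: lower in the filtration by total height of the $E$-part (equivalently, by the number $a+\e_1'+\e_2'$ plus degree considerations), within the same root-lattice weight. The leading term is, up to a nonzero power of $q$, an element of the usual PBW basis. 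It is obtained using the fact that $E_1^aE_{12}^{\e_1'}E_2^{\e_2'}$ is exactly the PBW monomial $E^I$.

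\textbf{Step 3: triangularity.} The leading-term map sets up a bijection between the two index sets. Fix a weight and a bound on the $E$-height; the corresponding span is finite-dimensional modulo the free $K$-factors. Within it, the transition matrix is unitriangular up to invertible scalars $q^{m}$. This implies both spanning and linear independence.

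The main obstacle is verifying that every commutator really does lower the filtration, and doing so uniformly. The delicate cases are the odd-root commutators $[E_{12},F_1]$, $[E_{12},F_{12}]$ and $[E_1,F_{12}]$. They produce $K$'s together with $E_2$, $F_2$, $E_1$ or $F_1$ factors, and these factors must then themselves be reordered. For this reason I would set up the filtration carefully, namely by the $\Z_{\ge0}$-valued height of the $E$-part. One also has to check that reordering the resulting lower terms never increases this height, which follows from the fact that $[E_\beta,F_\gamma]$ has $E$-height at most $\mathrm{ht}(\beta)-1$.
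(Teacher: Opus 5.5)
Your overall strategy of comparing with the usual PBW basis by a leading-term/triangularity argument works, and the displayed identity in Step 2 is correct. However, the verbal description of Step 2 is wrong and needs fixing.

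The rearranged monomial $E_1^aF_1^bF_{12}^{\e_1}F_2^{\e_2}E_{12}^{\e_1'}E_2^{\e_2'}$ and the usual monomial $F_1^bF_{12}^{\e_1}F_2^{\e_2}E_1^aE_{12}^{\e_1'}E_2^{\e_2'}$ differ only in the position of $E_1^a$. The odd part $E_{12}^{\e_1'}E_2^{\e_2'}$ already sits to the right of all $F$'s. Moving it leftward past the $F$'s, as you propose, would produce an $E$-before-$F$ ordering, not the usual order $F^JE^I$. What you actually have to do is move $E_1^a$ rightward past $F_1^bF_{12}^{\e_1}F_2^{\e_2}$. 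For that you need only three relations:
\begin{itemize}
\item[(i)] $E_1F_1=F_1E_1+[K_1]_q$;
\item[(ii)] $E_1F_2=F_2E_1$;
\item[(iii)] $E_1F_{12}=F_{12}E_1+F_2K_1$, which is the paper's $F_{12}E_1=E_1F_{12}-[K_1]_qF_2+q^{-1}F_2[K_1]_q$ simplified using $K_1F_2=qF_2K_1$.
\end{itemize}
The commutators you list as needed or ``delicate'' ($[E_2,F_2]$, $[E_{12},F_1]$, $[E_{12},F_2]$, $[E_{12},F_{12}]$) never occur, so your ``main obstacle'' disappears. Each correction term, after the $K$'s are pushed to the left, is already a usual PBW monomial with strictly smaller $E_1$-exponent and the same weight. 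For instance $E_1F_{12}F_2=F_{12}F_2E_1$ because $F_2^2=0$. No recursive reordering is required, and the leading coefficient is exactly $1$, not merely a power of $q$.

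With this correction, your argument is a valid variant of the paper's. The paper uses the same three relations but only establishes mutual spanning (usual vectors in terms of rearranged ones and conversely). It then concludes by a count: both families have the same number of elements in each piece of the total-degree filtration, and $U_q(\sll)$ is a free $U^0$-module of finite rank in each piece.

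Your unitriangularity argument gives spanning and independence at once, with no converse expansion and no rank count. You should phrase it over the commutative ring $U^0$, since the off-diagonal entries are Laurent polynomials in $K_1,K_2$ rather than scalars. A unitriangular matrix over a commutative ring is invertible. The blocks are finite because a fixed weight together with a bound on the $E$-height leaves only finitely many indices.
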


\begin{proof}
   Indeed, any usual PBW vector $K_1^{a_1}K_2^{a_2}F_1^{j_1}F_{12}^{j_3}F_2^{j_2}E_1^{i_1}E_{12}^{i_3}E_2^{i_2}$ is a linear combination of the reordered PBW vectors: this is because $E_1$ commutes with $F_2$, because $$F_{12}E_1=E_1F_{12}-[K_1]_qF_2+q^{-1}F_2[K_1]_q$$
   where $[K_1]_q=\frac{K_1-K_1^{-1}}{q-q^{-1}}$ as it is readily checked, and because of the commutation relation between $E_1$ and $F_1$ (and because $K_i$'s commute with PBW vectors up to a power of $q$). By the same argument, the converse also applies. Since there is the same number of vectors in each degree (where we say $|E_i|=|F_i|=1$) and since $U_q(\sll)$ is a free $U^0$-module (finitely generated in each degree), the lemma follows.
\end{proof}

\subsection{Kac modules} 
\label{sub.kac}

Fix an integer $n\geq 0$ and let $V_n$ be the $(n+1)$-dimensional simple module
over $U_q(\fsl(2))=\langle E_1,F_1,K_1^{\pm1}\rangle$ with highest weight vector $v_0$,
so that $K_1v_0=q^nv_0$ and $V_n$ has basis $\{F_1^iv_0\}_{0\leq i\leq n}$. Given
$\a\in\C$, we extend this to an action of the subalgebra $U_q(\mathfrak{p}^+)$
generated by $E_1,F_1,K_1^{\pm1},K_2^{\pm1},E_2$ by setting
$$ 
K_2v_0=q^{\a}v_0, \qquad E_2 v_0 = 0,
$$
extending $K_2$ to $V_n$ via $K_2F_1=qF_1K_2$; this forces $E_2=0$ on $V_n$ since
$[E_2,F_1]=0$. The {\em Kac module} is the induced module 
$$
\Vna=\U\ot_{U_q(\mathfrak{p}^+)}V_n \,.
$$

\begin{lemma}(\cite{Kac:reps})
\label{lem.simple}
The module $\Vna$ has dimension $4(n+1)$. It is simple if and only if $(n,\a)$ is
typical, i.e. $\a(\a+n+1)\neq 0$. In particular, $\Vna$ is simple when
$\a\in\C\setminus\Z$.
\end{lemma}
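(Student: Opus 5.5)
The plan is to realize $\Vna$ concretely through the PBW basis (\ref{PBW basis usual order}), and then to reduce simplicity to a single scalar computation on a distinguished layer of the module.

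\emph{Dimension.} Using the PBW basis $K_1^{a_1}K_2^{a_2}F^JE^I$, and reordering so that $F_1$ and the Cartan part are absorbed into $\Uqp$, I would show that $\U$ is a free right $\Uqp$-module with basis $\{F_{12}^{\e_1}F_2^{\e_2}\}_{\e_1,\e_2\in\{0,1\}}$. This uses that $\U=\UQ^-\UQ^0\UQ^+$ with $\UQ^-$ having basis $F_1^{j_1}F_{12}^{j_3}F_2^{j_2}$. The commutation relations $F_1F_{12}=qF_{12}F_1$ and $F_2F_{12}=-qF_{12}F_2$, together with the $q$-Serre relation, let me move $F_1$ to the right, so that $\UQ^-$ is spanned by $F_{12}^{\e_1}F_2^{\e_2}F_1^{j}$. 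Counting weights shows this is again a basis. Hence
\[
\Vna\cong \textstyle\bigoplus_{\e_1,\e_2} F_{12}^{\e_1}F_2^{\e_2}\ot V_n
\]
as vector spaces, which has dimension $4(n+1)$.

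\emph{Reduction of simplicity to the bottom layer.} Grade $\Vna$ by the number $\e_1+\e_2$ of odd lowering operators; the layer $\e_1=\e_2=1$ is the ``bottom''. Since $F_2,F_{12}$ square to zero and $q$-anticommute, the exterior algebra they generate acts freely on $V_n$. Let $W\subset \Vna$ be a nonzero submodule and $w\in W$ a nonzero vector. Applying a suitable product of $F_2$ and $F_{12}$ (chosen according to the highest layer in which $w$ has a nonzero component) gives a nonzero element of $W\cap (F_{12}F_2\ot V_n)$. The bottom layer is stable under $E_1,F_1,K_i^{\pm1}$ up to lower-order corrections. More precisely, since $F_{12}F_2$ commutes with $U_q(\fsl(2))$ up to a character twist, this layer is isomorphic to a twisted copy of the simple module $V_n$. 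Therefore $W$ contains the whole bottom layer, and in particular contains $F_{12}F_2 v_0$. Moreover $v_0$ generates $\Vna$ by construction. So $\Vna$ is simple if and only if $v_0$ lies in the submodule generated by $F_{12}F_2v_0$.

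\emph{The key scalar.} To decide this, I would compute $E_{12}E_2F_{12}F_2 v_0$, or an analogous product of raising operators, projected to the top layer $1\ot V_n$. The projection is a vector of weight equal to the weight of $v_0$, hence equal to $c\, v_0$ for some scalar $c$. Expanding with $[E_2,F_2]=[K_2]_q$ and the relation between $E_{12}$ and $F_{12}$ (which involves $K_1K_2$ and $F_1E_1$-type terms acting on $v_0$), I expect to get $c$ equal, up to a unit, to $[\a]_q[\a+n+1]_q$. Here $\a$ comes from $K_2v_0=q^\a v_0$, and $\a+n+1$ from $K_1K_2$ acting on $F_2v_0$ together with an $[n]$-correction coming from $F_1$. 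Then:
\begin{itemize}
\item If $c\neq 0$, the submodule generated by $F_{12}F_2v_0$ contains $v_0$, so $\Vna$ is simple.
\item If $c=0$, one checks that the bottom layer generates a proper submodule; for instance, it is killed in the top layer by all raising words, by the weight and $U_q(\fsl(2))$-equivariance argument.
\end{itemize}
Since $[x]_q=0$ exactly when $x=0$ for generic $q$, the module is simple if and only if $\a(\a+n+1)\neq0$. When $\a\notin\Z$, both factors are nonzero automatically.

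The main obstacle is this last step: carrying out the super-commutator computation of $c$ carefully with the correct signs and powers of $q$, and checking in the atypical case that no other raising word reaches $v_0$. I would organize the computation by first commuting $E_2$ past $F_{12}F_2$ and then $E_{12}$, using the identity for $F_{12}E_1$ from Lemma \ref{prop: rearranged PBW} as a template.
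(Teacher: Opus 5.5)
Your proof is correct, and it takes a genuinely different route from the paper, which gives no argument at all: it only cites Kac's result for the classical, non-quantized Lie superalgebra. You instead give a self-contained proof directly in $U_q(\mathfrak{sl}(2|1))$. Your steps are the standard Kac strategy transported to the quantum setting:
\begin{enumerate}
\item the algebra is free over $U_q(\mathfrak p^+)$ on $\{F_{12}^{\epsilon_1}F_2^{\epsilon_2}\}$;
\item every nonzero submodule meets the bottom layer, because the $q$-exterior algebra on $F_{12},F_2$ acts freely;
\item the bottom layer is a simple $U_q(\mathfrak{sl}_2)$-module;
\item simplicity reduces to a single scalar.
\end{enumerate}
What your route buys is an honest proof of the quantum statement the paper actually uses, together with the explicit typicality scalar. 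Your prediction for that scalar is right. Using $E_2F_{12}=-F_{12}E_2-q^{-1}F_1K_2^{-1}$ and $E_1F_{12}=F_{12}E_1+F_2K_1$, one finds $E_{12}E_2F_{12}F_2v_0=q^{-2}\bigl([\alpha+1]_q[\alpha+n]_q-[n]_q\bigr)v_0=q^{-2}[\alpha]_q[\alpha+n+1]_q\,v_0$.

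Two points should be tightened.

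\emph{Exact stability of the bottom layer.} Drop the hedge ``stable up to lower-order corrections''. If such corrections really existed, the step ``$W$ contains the whole bottom layer'' would fail. What you need, and what is true, is exact commutation: $E_1$, $F_1$, $K_1$ commute with $F_{12}F_2$ on the nose, and $K_2F_{12}F_2=qF_{12}F_2K_2$. Both identities follow from a two-line computation using $F_2^2=F_{12}^2=0$.

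\emph{The atypical direction.} You flagged this as the main obstacle, but it is immediate with the triangular decomposition $U=U^-U^0U^+$. Let $\lambda$ be the weight of $v_0$. Every weight of $V(n,\alpha)$ has the form $\lambda-m_1\alpha_1-m_2\alpha_2$ with $m_i\ge 0$, and the $\lambda$-weight space is spanned by $v_0$. Hence the $\lambda$-weight part of the submodule generated by $F_{12}F_2v_0$ is exactly $U^+_{\alpha_1+2\alpha_2}F_{12}F_2v_0$. The only PBW monomial of degree $\alpha_1+2\alpha_2$ in $U^+$ is $E_{12}E_2$, so there is no ``other raising word'' to check. When the scalar vanishes, this nonzero submodule therefore misses $v_0$ and is proper.

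The same weight argument shows that no projection to the top layer is needed: $E_{12}E_2F_{12}F_2v_0$ is already a multiple of $v_0$.
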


From now on we assume $(n,\a)$ typical. From the PBW basis (\ref{PBW basis usual order}) it follows that:

\begin{lemma}
\label{lemma: PBW on V(n,a)}
The vectors
\be
\label{Bna}
F_{12}^{\e_1}F_2^{\e_2}F_1^i v_0, \qquad \e_1,\e_2\in\{0,1\}, \quad 0\leq i \leq n,
\ee
form a basis $\Bna$ of $\Vna$. 
\end{lemma}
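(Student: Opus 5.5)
The plan is to deduce the statement from the PBW basis (\ref{PBW basis usual order}) of $\U$ and the definition $\Vna=\U\ot_{U_q(\mathfrak{p}^+)}V_n$. The first step is to split $\U\cong U^-\ot U_q(\mathfrak{p}^+)$ as a right $U_q(\mathfrak{p}^+)$-module, where $U^-$ is spanned by the odd part $\{F_{12}^{\e_1}F_2^{\e_2}\}$. To get this, I reorder the usual PBW basis so that the odd negative root vectors sit on the left. Concretely, I claim that the products
$$F_{12}^{\e_1}F_2^{\e_2}\cdot F_1^{j}K_1^{a_1}K_2^{a_2}E_1^{i_1}E_{12}^{i_3}E_2^{i_2}$$
also form a basis of $\U$. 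I would argue exactly as in Lemma \ref{prop: rearranged PBW}: using $F_1F_{12}=qF_{12}F_1$ and the expression of $F_2F_1$ in terms of $F_1F_2$ and $F_{12}$, each usual PBW monomial $F_1^{j_1}F_{12}^{j_3}F_2^{j_2}$ is a combination of the monomials $F_{12}^{\e_1}F_2^{\e_2}F_1^j$ of the same root degree, and conversely. Both families have the same size in each degree, and $\U$ is a free $U^0$-module, finitely generated in each degree. Hence the reordered family is a basis.

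Next I note that $U_q(\mathfrak{p}^+)$ has PBW basis $F_1^jK^aE^I$, a subfamily of the above. The triangular decomposition therefore shows that $\U$ is a free right $U_q(\mathfrak{p}^+)$-module with basis the four elements $F_{12}^{\e_1}F_2^{\e_2}$, $\e_1,\e_2\in\{0,1\}$. Tensoring over $U_q(\mathfrak{p}^+)$ with $V_n$ gives
$$\Vna\cong\bigoplus_{\e_1,\e_2}F_{12}^{\e_1}F_2^{\e_2}\ot V_n .$$
Using the basis $\{F_1^iv_0\}_{0\le i\le n}$ of $V_n$, we conclude that the vectors $F_{12}^{\e_1}F_2^{\e_2}F_1^iv_0$ form a basis of $\Vna$. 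This also recovers the dimension count $4(n+1)$ of Lemma \ref{lem.simple}.

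The main obstacle is justifying freeness over $U_q(\mathfrak{p}^+)$, that is, that the multiplication map $U^-_{\mathrm{odd}}\ot U_q(\mathfrak{p}^+)\to\U$ is bijective and not just surjective. I would take this directly from Yamane's PBW theorem as quoted in (\ref{PBW basis usual order}), since the reordering is triangular with respect to root degree and the number of monomials per degree agrees.
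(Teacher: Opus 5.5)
Your proposal is correct and is the argument the paper leaves implicit when it says the lemma follows from the PBW basis (\ref{PBW basis usual order}): reordering so that $F_{12}^{\e_1}F_2^{\e_2}$ stands to the left of $F_1^j$ (as in Lemma \ref{prop: rearranged PBW}) makes $U_q(\mathfrak{sl}(2|1))$ a free right $U_q(\mathfrak{p}^+)$-module on these four elements, and inducing from $V_n$ then gives the basis. The one point worth stating explicitly is that the monomials $F_1^jK_1^{a_1}K_2^{a_2}E^I$ span all of $U_q(\mathfrak{p}^+)$, which is needed for the sum to be direct; this holds because $E_2F_1=F_1E_2$ and $E_1F_1=F_1E_1+[K_1]_q$ let you move $E$'s past $F_1$ without leaving that span.
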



\subsection{The braiding} 

By a {\em weight module} over $\U$ we mean a finite-dimensional module on which $K_1,K_2$
act diagonalizably with eigenvalues powers of $q$ (with possibly complex exponents).
We denote by $\CC$ the category of weight modules over $\U$. The category $\CC$ is braided in the following way: for any $U,V\in\CC$, the braiding $c_{U,V}:U\ot V\to V\ot U$ is given by 
\be
\label{cuv}
c_{U,V}(u\ot v)=\tau_{U,V}\left(\HH (\Theta \cdot (u\ot v))\right)
\ee
where 
\begin{itemize}
\item
$\tau_{U,V}(x\ot y)=(-1)^{|x||y|}y\ot x$ is the transposition (in the category of super-vector spaces), 
\item 
$\HH=q^{\sum_{i,j=1}^2 a'_{ij}H_i\ot H_j}$ as an operator, where $(a'_{ij})$ is the inverse of the Cartan matrix of $\fsl(2|1)$,
\item  
and 
$$
\Theta=\sum_{J} c_J\cdot E^J\ot F^J
$$
is the quasi-R-matrix, the sum being over the PBW monomials~\eqref{PBW}. Here $c_J\in\Q(q)$ is a coefficient, for its exact formula see \cite{Yamane:quantized-superalgebras-R-matrices}.
\end{itemize}

Note that the quasi-R-matrix has $L_R$-degree zero. Note also that although $\Theta$ is an infinite sum, only finitely many terms act nonzero
on a fixed module in $\CC$. Finally, note that the action of $\Theta$ on a vector $u\ot v$ involves a sign $(-1)^{|F^J||u|}$.

\subsection{Pivotal and ribbon structure} 
\label{sub.piv}
The category $\CC$ is pivotal and ribbon in the following way. Let $g=K_2^{-2}$ (which is precisely the Weyl vector $K_{2\rho}$, where $\rho=\rho_{\ov{0}}-\rho_{\ov{1}}$), this is a pivotal element for $\U$ in the sense that it is group-like and $S^2$ is conjugation by $g$. Thus, right and left duality for $V\in \CC$ are given by $$\rev_V(v\ot v')=(-1)^{|v|}v'(K_2^{-2}v), \ \  \rcoev_V(1)=\sum_i (-1)^{|v_i|}v'_i \ot K_2^2 v_i$$
where $(v_i)$ is a basis of $V$ with dual basis $(v'_i)$ while left evaluation/coevaluation $\lev_V, \lcoev_V$ are the usual ones for (super)vector spaces. As a consequence of duality, for every $V\in \CC$ there is an isomorphism $j_V:V^{**}\to V$ characterized by
\be
\label{jVdef}
j_V^{-1}(v)(v')=(-1)^{|v|}v'(K_2^{-2}v).
\ee
The category $\CC$ is ribbon with twists given by
$$\theta_V=(\id_V\ot\rev_V)(c_{V,V}\ot \id_{V^*})(\id_V\ot \lcoev_V).$$
On $V(0,\a)$ this is
\be
\label{twist}
\theta_{V(0,\a)}= q^{-2\a(\a+1)}\,\id_{V(0,\a)} \,,
\ee
which belongs to $\Q(q,q^{\a},q^{\a^2})$.

\section{The Links-Gould invariant}
\label{section: Links-Gould invariant}

We assume the reader is familiar with the Reshetikhin--Turaev invariants of framed, 
oriented tangles from ribbon categories \cite{RT1, Turaev:BOOK1}. In this paper, the relevant tensor category $\CC$ is the category of weight modules over $U_q(\sl(2|1))$ at generic $q$. If $T$ is a framed, oriented tangle, we denote by $F(T,V)$ its Reshetikhin--Turaev
invariant in which every component is colored with a module $V$ in $\CC$. In what follows we denote $V_{\a}=V(0,\a)$.
\medskip

The Links-Gould invariant of an oriented link $L\sb S^3$ is defined as follows. Let $\wt{L}$ be $L$ with an arbitrary choice of framing (and same orientation) and let $\wt{L}_o$ be a $(1,1)$-tangle with closure $\wt{L}$. Since
$V_{\a}$ is simple, $F(\wt{L}_o,\Va)=c_{\wt{L}_o}(\a)\cdot \id_{\Va}$ for some
$c_{\wt{L}_o}(\a)\in \Q(q,q^{\a},q^{{\a}^2})$. By~\cite{GPT:modified}, $c_{\wt{L}_o}(\a)$ is independent
of where the link was cut open\footnote{More precisely, \cite{GPT:modified} proves that $d(V_{\a})c_{\wt{L}_o}(\a)$ is independent of where the link was cut open, where $V_{\a}$ is the color of the cut-open strand and $d(V_{\a})$ is the so called ``modified'' dimension. Since links are single-colored in the present paper, we do not need to multiply by $d(V_{\a})$.}, hence it is an invariant of $\wt{L}$ and
$$
q^{2\a(\a+1)w(\wt{L})}\cdot c_{\wt{L}_o}(\a) \,,
$$
is an invariant of $L$ (here $w(\wt{L})$ is the writhe of $\wt{L}$). The last expression can be shown to be a polynomial in $q$ and $p:=q^{2\a+1}$, this is the {\em Links-Gould invariant}, which we denote by $LG(L,q,p)$\footnote{Note that Lemmas \ref{lemma: algebraic bound} and \ref{lemma: coefficients of iotaX} together with Proposition \ref{prop: Seifert formula} imply that LG of any link is a Laurent polynomial in $q^{2\a}$ (instead of merely $q^{\a}$).}. In particular, if the writhe is zero (which will be the case in the proof of our main theorem) then $$F(\wt{L}_o,\Va)=LG(L;q,q^{2\a+1})\cdot\id_{\Va}.$$

\medskip

The choice of variable $p=q^{2\a+1}$ (instead of $q^{2\a}$) is to have the symmetry property $$LG(L,q,p)=LG(L,q,p^{-1})$$
which follows easily from $V(0,\a)^*\cong V(0,-\a-1)$, see e.g. \cite[Proposition 3.3]{GLN:genus-for-Lie-superalgebras}.

\section{Proof of the main theorem}
\label{section: proof of main theorem}
\def\Uq{U_q(\sll)}
\def\Va{V_{\a}}

The proof follows the lines of the argument of \cite{LNV:Murasugi-sum}, namely:
\begin{enumerate}
\item Given a link $L$, express $F(L_o,\Va)$ via a ``Seifert surface formula'' in terms of the operator invariant $F(T_{\S},X)$ of an $X$-colored bottom tangle $T=T_{\S}$ associated to a Seifert surface $\S$ of $L$, where $X=\Va\ot\Va^*$ (this part of the argument is taken from \cite{LNV:genus-unrolled}, see also \cite{GLN:genus-for-Lie-superalgebras} for the super-case). This is Proposition \ref{prop: Seifert formula}.
    \item Prove a formula for the ``top term'' of the operator invariant $F(T_{\S},X)$  (Proposition \ref{prop: prop surface and coevaluations}). This formula says that, up to ``lower degree terms, $F(T_{\S},X)$ coincides with a bunch of coevaluations times the top term of $LG(L)$. This in turn is proved in the following steps:
    
\begin{enumerate}
    \item Separate $Z_T$ into top degree part and lower degree part (acting on $X$). Here the PBW vector bringing top degree in $q^{-2\a}$ is $F_{12}F_2E_{12}E_2$ by Lemma \ref{lemma: algebraic bound}.
    \item By Lemma \ref{lemma: ``cointegral'' cutting lemma} this vector ``cuts'' the identity of $X$ (up to lower degree terms), bringing $F(T_{\S},X)$ into the desired form.
\end{enumerate}
\item Express a Murasugi-sum in terms of operator invariants of a bottom tangle and a ``top'' tangle, to which applying Prop. \ref{prop: prop surface and coevaluations} and \ref{prop: F(TS,X) as coev but upside-down} implies the theorem in the connected surface case.

\item We deduce the non-connected $\S$ case from the connected case using an expression for $\id_X$ in terms of horizontal twists (note that we didn't knew how to carry out this generalization in \cite{LNV:Murasugi-sum}).
\end{enumerate}

\subsection{The module $X$} In what follows we denote $V_{\a}=V(0,\a)$ and $X=V_{\a}\ot V_{\a}^*$. We always consider $\Va$ with the PBW basis of Lemma \ref{lemma: PBW on V(n,a)} (which is $F_{12}^{\e_1}F_{2}^{\e_2}v_0, \e_1,\e_2\in \{0,1\}$) and we consider $\Va^*$ with the dual basis. Then we consider $X$ with the tensor product of these two bases, and $X^*$ with the dual basis. Any tensor product of $X$ and $X^*$ is henceforth considered with the tensor product of such bases, to which we simply refer as {\em standard bases}.
\medskip

We need the following properties about the module $X$ together with its standard basis. The following is \cite[Lemma 4.3]{GLN:genus-for-Lie-superalgebras} specialized to $\sl(2|1)$.

\def\maxdeg{\text{maxdeg}}

\begin{lemma}\cite{GLN:genus-for-Lie-superalgebras}
\label{lemma: algebraic bound}
    A PBW vector $K_1^{a_1}K_2^{a_2}E_1^{a}F_1^b F_{12}^{\e_1}F_{2}^{\e_2}E_{12}^{\e_1'}E_{2}^{\e_2'}$ acts on $X$ (in the standard basis) by a matrix whose coefficients are polynomials in $q^{-2\a}$ (with $\Q(q)$-coefficients) of degree $\leq \e_1'+\e_2'$. In particular, any vector of $\Uqsl_{\Q}$ acts on $X$ by a matrix whose coefficients lie in $\Q(q)[q^{-2\a}]$ and satisfy $\maxdeg_{q^{-2\a}}\leq 2$.
\end{lemma}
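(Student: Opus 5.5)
I would prove the statement by computing how each factor of the PBW vector acts on $X=\Va\ot\Va^*$ through the coproduct, and tracking exactly where powers of $q^{\pm\a}$ can enter.

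\textbf{Step 1: the action on $\Va$.} In the basis $F_{12}^{\e_1}F_2^{\e_2}v_0$ of Lemma \ref{lemma: PBW on V(n,a)}, the vectors are weight vectors. So $K_2$ acts diagonally by $q^{\a}$ times an integer power of $q$, and $K_1$ acts by an integer power of $q$. Since $E_1$ and $F_1$ generate a copy of $U_q(\fsl(2))$ whose weights on $\Va$ are integral, their matrix entries lie in $\Q(q)$ and involve no $q^{\a}$. The odd generators $F_2$ and $F_{12}$ act by moving basis vectors to basis vectors, with coefficients in $\Q(q)$ up to sign, since $F_{12}^2=F_2^2=0$ and $F_2F_{12}=-qF_{12}F_2$. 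For $E_2$ and $E_{12}$ I would use $[E_2,F_2]=[K_2]_q$ and the analogous relation for $E_{12}$ with $F_{12}$ (which yields $[K_1K_2]_q$ up to lower terms). Commuting these past the $F$'s to $v_0$ gives entries that are linear combinations of $q^{\pm\a}$ times $\Q(q)$. The upshot for $\Va$ is that each single application of $E_2$ or $E_{12}$ contributes a factor linear in $q^{\a},q^{-\a}$, and nothing else contributes $\a$-dependence.

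\textbf{Step 2: passing to $X$.} The weights of $\Va^*$ are the negatives of those of $\Va$. On $X$, $K_2$ therefore acts by $q^{0}$ times integer powers of $q$, because the $q^{\a}$ and $q^{-\a}$ cancel in $\De(K_2)=K_2\ot K_2$. So $K_i^{\pm1}$, $E_1$ and $F_1$ act on $X$ with entries in $\Q(q)$. For $F_2$ and $F_{12}$, the coproducts involve $K_2^{-1}\ot F_2$ and similar terms, so a factor $q^{\mp\a}$ could appear.

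This is the main obstacle. The claim is that entries are polynomials in $q^{-2\a}$, so I need to normalize correctly. The trick I would try first is to rescale: conjugate by an $\a$-dependent diagonal change of the standard basis, or note that the claim is about the product. Each $F$ contributes $q^{\a}$ on one tensor factor and its $E$-partner contributes $q^{-\a}$, and the statement is about the total degree in the basis of $X$. More concretely, I would compute the matrices of $F_2,F_{12},E_2,E_{12}$ on $X$ explicitly, since $\Va$ is only $4$-dimensional and $X$ is $16$-dimensional. I expect to find that $F_2$ and $F_{12}$ have entries in $\Q(q)$, after using the $\Va^*$ dual action $S(F_2)=-K_2F_2$, which produces a $q^{\a}$ cancelling the $K_2^{-1}$. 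I also expect $E_2$ and $E_{12}$ to have entries of the form $a+bq^{-2\a}$ with $a,b\in\Q(q)$, because $S(E_2)=-E_2K_2^{-1}$ together with the $[K_2]_q$ factor gives $q^{\a}-q^{-\a}$ times $q^{-\a}$.

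\textbf{Step 3: conclusion.} Once each generator's matrix is known to lie in $\Q(q)$ (for $K_i^{\pm1}$, $E_1$, $F_1$, $F_2$, $F_{12}$) or to be of degree $\leq1$ in $q^{-2\a}$ (for $E_2$, $E_{12}$), the product of matrices of $K_1^{a_1}K_2^{a_2}E_1^{a}F_1^b F_{12}^{\e_1}F_{2}^{\e_2}E_{12}^{\e_1'}E_{2}^{\e_2'}$ has entries that are polynomials in $q^{-2\a}$ of degree at most $\e_1'+\e_2'$. This holds because degrees add under matrix multiplication, and entries in $\Q(q)[q^{-2\a}]$ are closed under products and sums. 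The final sentence of the statement then follows from Lemma \ref{prop: rearranged PBW}: every element of $\Uqsl_{\Q}$ is a $\Q(q)$-linear combination of such rearranged PBW vectors, and $\e_1'+\e_2'\leq 2$.
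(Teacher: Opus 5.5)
Your argument is correct. The paper gives no proof of this lemma and simply quotes it from \cite{GLN:genus-for-Lie-superalgebras}, so there is no in-paper argument to compare against. Your generator-by-generator computation on $X$ is the natural self-contained proof.

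The two ``expectations'' in Step 2 are both true and are one-line checks:
\begin{itemize}
\item For $F_2$: in $\Delta(F_2)=K_2^{-1}\otimes F_2+F_2\otimes 1$, the factor $q^{-\alpha}$ coming from $K_2^{-1}$ on $V_\alpha$ cancels the $q^{\alpha}$ produced by $S(F_2)=-K_2F_2$ on $V_\alpha^*$. Hence $F_2$ acts on $X$ by a $\mathbb{Q}(q)$-matrix, as do $K_i^{\pm1},E_1,F_1$.
\item For $E_2$: both terms of $\Delta(E_2)=E_2\otimes K_2+1\otimes E_2$ give entries of the form $q^{-\alpha-l}\frac{q^{\alpha+k}-q^{-\alpha-k}}{q-q^{-1}}$ with $k,l\in\mathbb{Z}$, that is, $a+bq^{-2\alpha}$ with $a,b\in\mathbb{Q}(q)$. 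In the first term the $q^{-\alpha}$ comes from $K_2$ acting on $V_\alpha^*$; in the second it comes from $S(E_2)=-E_2K_2^{-1}$.
\end{itemize}

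Two small clean-ups:
\begin{itemize}
\item Drop the rescaling discussion in Step 2. The statement is about the standard basis, so an $\alpha$-dependent change of basis would change what is being proved, and the direct computation above shows it is unnecessary.
\item $F_{12}$ and $E_{12}$ need no separate coproduct analysis. Since $F_{12}=F_1F_2-q^{-1}F_2F_1$ and $E_{12}=E_1E_2-q^{-1}E_2E_1$, they inherit the bounds from the generators. So the $q^{-2\alpha}$-degree of any monomial acting on $X$ is at most its number of $E_2$'s, which for your PBW vector is at most $\epsilon_1'+\epsilon_2'$. With that, Step 3 and the appeal to the PBW basis for the final sentence go through as you wrote them.
\end{itemize}
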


\def\ovB{\overline{B}}

\begin{lemma}(Cutting lemma)
\label{lemma: ``cointegral'' cutting lemma}
    There exists a Laurent polynomial $\ka(\a)\in \Q(q)[q^{\pm 2\a}]$ of the form $$\kappa(\a)=cq^{-2\a-1}+d+cq^{2\a+1}$$ where $c,d\in \Q(q)$, such that $$F_{12}F_{2}E_{12}E_{2}|_X=\ka(\a)\cdot \lcoev_{\Va}\circ\rev_{\Va}+\ovB$$
    where $\ovB:X\to X$ is an operator whose coefficients (in the standard basis) are polynomials in $q^{-2\a}$ of degree $<2$. 
\end{lemma}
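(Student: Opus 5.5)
The plan is to compute the operator $F_{12}F_2E_{12}E_2|_X$ directly on the $16$-dimensional space $X=\Va\ot\Va^*$ in the standard basis, isolating the part of $q^{-2\a}$-degree exactly $2$. By Lemma \ref{lemma: algebraic bound}, all other contributions are polynomials of degree $<2$ and can be absorbed into $\ovB$. First I would write $X=\Va\ot\Va^*$ and expand the action through the iterated coproduct: $\De(E_2)=E_2\ot K_2+1\ot E_2$, and $\De(E_{12})$ follows from the definition $E_{12}=E_1E_2-q^{-1}E_2E_1$. The dependence on $q^{\a}$ enters only through $K_2$-eigenvalues. Since $\Va$ has basis $F_{12}^{\e_1}F_2^{\e_2}v_0$ with $K_2$-weights $q^{\a}\cdot q^{\text{integer}}$, and $\Va^*$ has the inverse weights, I will track where factors $q^{-\a}$ appear. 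In line with the proof of Lemma \ref{lemma: algebraic bound}, I expect each of $E_2$ and $E_{12}$ to contribute at most one power of $q^{-2\a}$. The top-degree part of $E_{12}E_2|_X$ should then come from a single term, most likely $(E_{12}\ot E_{12})$-type contributions where both $E$'s act on the $\Va^*$ factor with $K$'s evaluated on $\Va$, or a specific mixed term.

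Second, I will use that on the $4$-dimensional module $\Va$ (and on $\Va^*$) the product $E_{12}E_2$ has rank one. It kills everything except the lowest vector $F_{12}F_2v_0$ and sends that vector to a multiple of $v_0$, where the multiple is a polynomial in $q^{\pm\a}$ whose leading coefficient I compute from $[E_2,F_2]=[K_2]_q$ and its analogue for $E_{12},F_{12}$. Likewise $F_{12}F_2$ sends $v_0$ to the lowest vector. Consequently the degree-$2$ part of $F_{12}F_2E_{12}E_2|_X$ should be a rank-one operator $x\mapsto \lambda(x)\,y$. Here $\lambda$ is a weight-zero functional on $X$ and $y\in X$ is a weight-zero vector, both built from products of the extremal basis vectors. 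Among weight-zero vectors of $X$, I then need to show $y$ is proportional to $\lcoev_{\Va}(1)=\sum_i v_i\ot v_i'$, and $\lambda$ is proportional to $\rev_{\Va}$.

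Third, for this identification I would compute the four coefficients of $y$ on the vectors $v_i\ot v_i'$, and the four values $\lambda(v_i\ot v_i')$. Each comes from the coproduct expansion of $F_{12}F_2$ (respectively $E_{12}E_2$), so it involves the $K_2$-eigenvalues $q^{\a},q^{\a+1},q^{\a+1},q^{\a+2}$ of the basis (up to normalisation) and signs from the super-structure. The claim is that the top-degree parts line up with $\rev_{\Va}(v_i\ot v'_i)=(-1)^{|v_i|}q^{-2(\text{weight})}$, which involves exactly $K_2^{-2}$. Only the top $q^{-2\a}$-degree needs to match; whatever fails to match lands in degree $<2$ and goes into $\ovB$. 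The scalar $\ka(\a)$ is then the product of the two proportionality constants. Since $\ka(\a)$ arises from $[K_2]_q$-type factors, i.e. $(q^{\a+k}-q^{-\a-k})/(q-q^{-1})$, after multiplying out and normalising by the leading $q^{-2\a}$ power, I expect a Laurent polynomial of the form $cq^{-2\a-1}+d+cq^{2\a+1}$. The symmetry $c\leftrightarrow c$ should reflect $\a\mapsto-\a-1$, i.e. $\Va^*\cong V(0,-\a-1)$.

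The main obstacle is the third step: checking that the top-degree rank-one operator really factors through $\rev_{\Va}$ and $\lcoev_{\Va}$, rather than through some other weight-zero functional and vector. This requires careful bookkeeping of super-signs $(-1)^{|v|}$, the $-q$ factors in $E_2E_{12}=-qE_{12}E_2$, and the $K_1$-contributions hidden in $\De(E_{12})$. My first attempt would be to reduce sign errors by computing with $F_{12}F_2v_0$ as a lowest weight vector and using the relation $E_{12}E_2F_{12}F_2v_0=(\text{explicit } [K]_q\text{-product})\,v_0$. If the direct match is messy, a fallback is to observe that modulo degree $<2$ the operator intertwines the $E_1,F_1,K$-actions. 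Then use that $\rev_{\Va}$, $\lcoev_{\Va}$ span the relevant invariant lines to force proportionality.
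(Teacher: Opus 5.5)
\textbf{Verdict: there is a genuine gap.} Your overall plan is the same brute-force check the paper performs (in Mathematica): compute the coefficient of $q^{-4\a}$ in $F_{12}F_2E_{12}E_2|_X$ and compare it with the $q^{-2\a}$-part of $\lcoev_{\Va}\circ\rev_{\Va}$. The gap is the shortcut that reduces this check to eight numbers.

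\textbf{The rank-one step does not follow.} That $E_{12}E_2$ has rank one on $\Va$ and on $\Va^*$ tells you nothing about its action on $X$. That action goes through $\De(E_2)=E_2\ot K_2+1\ot E_2$ and $\De(E_{12})=E_{12}\ot K_1K_2+(1-q^{-2})E_2\ot E_1K_2+1\ot E_{12}$, and all of these terms contribute to the top degree. Write $w_0,w_1,w_2,w_3$ for $v_0,F_2v_0,F_{12}v_0,F_{12}F_2v_0$. Then the $q^{-4\a}$-coefficient of $E_{12}E_2|_X$ has rank four:
\begin{itemize}
\item it sends each $w_i\ot w_i'$ to a multiple of $w_0\ot w_3'$;
\item it is nonzero on $w_1\ot w_0'$, $w_2\ot w_0'$, $w_3\ot w_1'$ and $w_3\ot w_2'$;
\item it sends $w_3\ot w_0'$ to a nonzero multiple of $\lcoev_{\Va}(1)$.
\end{itemize}
The top part of the full product is rank one, but only because $F_{12}F_2|_X$ kills three of the four image directions. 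It kills $\lcoev_{\Va}(1)$ because that vector is invariant. It kills two further vectors, of weights $\a_1+\a_2$ and $\a_2$, only by an explicit cancellation.

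On weight-zero vectors, by contrast, your factorization $x\mapsto \lambda(x)y$ holds for trivial reasons: $w_0\ot w_3'$ spans the weight space of $X$ of weight $\a_1+2\a_2$. So the identification you flag as the main obstacle is the easy part. The step that carries the content is showing that the top part vanishes on every non-weight-zero basis vector of $X$, and your plan skips exactly that.

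\textbf{The fallback does not fill the gap.} $F_{12}F_2E_{12}E_2$ does commute exactly with $E_1,F_1,K_1,K_2$, so its top part is equivariant for this even subalgebra. However, $\Va$ restricts to three pairwise non-isomorphic simple pieces, spanned by $w_0$, by $w_1,w_2$, and by $w_3$. Hence the equivariant weight-zero vectors and functionals on $X$ form $3$-dimensional spaces, not lines, and equivariance alone cannot force proportionality to $\lcoev_{\Va}$ and $\rev_{\Va}$.

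\textbf{Two smaller points.}
\begin{itemize}
\item Since $a_{22}=0$, $K_2$ commutes with $F_2$. So the $K_2$-eigenvalues on $w_0,\dots,w_3$ are $q^{\a},q^{\a},q^{\a+1},q^{\a+1}$, not the ones you list. The values to match are $\rev_{\Va}(w_i\ot w_i')=q^{-2\a},\,-q^{-2\a},\,-q^{-2\a-2},\,q^{-2\a-2}$.
\item You need not derive the shape of $\ka$ from $[K_2]_q$-factors. Because $\rev_{\Va}$ carries exactly one factor $q^{-2\a}$, only $c$ is pinned down by the top-degree match. The constant $d$ and the coefficient of $q^{2\a+1}$ only produce terms of $q^{-2\a}$-degree at most one, which can be absorbed into $\ovB$. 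The symmetric form is therefore a free choice.
\end{itemize}
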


Note that since $\rev_{\Va}$ involves the action of the pivot $g=K_2^{-2}$ on $\Va$, the morphism $\ka(\a)\cdot \lcoev_{\Va}\circ\rev_{\Va}$ has coefficients in $\Q(q)[q^{-2\a}]$ (of degree 2 in $q^{-2\a}$), agreeing with the statement of Lemma \ref{lemma: algebraic bound}.

\begin{proof}
    This part was simply checked in Mathematica.
\end{proof}

\begin{remark}
\label{remark: symmetry of kappa alpha}
    For the proof of the main theorem it will be important to note that $\kappa(\a)$ has $\maxdeg_{q^{\pm \a}}=1$ and satisfies the same symmetry as the Links-Gould invariant of a link (namely, it is invariant under $p\mapsto p^{-1}$ if $p=q^{2\a+1}$.)
\end{remark}

\begin{remark}
\label{remark: cutting lemma for root of 1 case is easier}
    The analogue of Lemma \ref{lemma: ``cointegral'' cutting lemma} for quantum groups at roots of unity (for any Lie type) was proved in \cite{LNV:Murasugi-sum} where instead of $F_{12}F_{2}E_{12}E_{2}$ one has $F^{\Imax}E^{\Imax}$ (there $E^{\Imax}$ and $F^{\Imax}$ are the top positive and negative PBW vectors). In that case, the module $X$ turns out to be a module over the small quantum group $\uqg$, which is a finite-dimensional Hopf algebra hence has a (unique up to scalar) non-zero cointegral $\Lambda\in\uqg$. In \cite[Lemma 5.9]{LNV:Murasugi-sum} it is shown that $F^{\Imax}E^{\Imax}$ acts on $X$ as the cointegral $\La\in\uqg$ up to lower degree terms. Then one can apply a result of \cite{CGPT} about cointegrals which essentially follows from the defining property of the cointegral and generalities about projective modules. The same argument cannot be applied for general quantum supergroups at generic $q$ since there is no finite-dimensional Hopf algebra to play the role of the small quantum group $\uqg$. 
\end{remark}
 
 Let $\iota_X:X^*\to X$ be the isomorphism defined through the pivotal structure of $\CC$ by $$X^*=(\Va\ot\Va^*)^*\cong \Va^{**}\ot \Va^*\to \Va\ot\Va^*=X$$
where the last morphism is $j_{\Va}\ot\id_{\Va^*}$, $j_{\Va}:\Va^{**}\to \Va$ being the isomorphism coming from the pivotal structure.

\begin{lemma}\cite{GLN:genus-for-Lie-superalgebras}
\label{lemma: coefficients of iotaX}
   The isomorphism $\iota_X:X^*\to X$ has coefficients in $q^{2\a}\Q(q)$ in the standard basis of $X^*$ and $X$.
\end{lemma}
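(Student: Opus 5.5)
We need to prove that $\iota_X:X^*\to X$ has coefficients in $q^{2\a}\Q(q)$ in the standard bases. The plan is to write $\iota_X$ as the composite of three explicit maps and compute the matrix of each:
\[
(\Va\ot\Va^*)^*\xrightarrow{\ \phi\ }\Va^{**}\ot\Va^*\xrightarrow{\ j_{\Va}\ot\id\ }\Va\ot\Va^*.
\]
\begin{enumerate}
\item \textbf{The identification $\phi$.} This is the canonical isomorphism $(U\ot W)^*\cong W^*\ot U^*$, here with $W^*=\Va^*$ and $U^*=\Va^{**}$. It is built from the left (super-vector-space) duality $\lev,\lcoev$, which involves no pivot. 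In the dual standard basis of $X^*$ and the tensor product of dual bases on the target, its matrix is therefore a signed permutation matrix. The only entries are $\pm1$, coming from the super-transposition signs.
\item \textbf{The map $j_{\Va}$.} By (\ref{jVdef}), $j_{\Va}^{-1}(v)(v')=(-1)^{|v|}v'(K_2^{-2}v)$. On a standard basis vector $v=F_{12}^{\e_1}F_2^{\e_2}v_0$ of $\Va$, the element $K_2$ acts by a scalar. We have $K_2v_0=q^{\a}v_0$, and the relation $K_2F_j=q^{-a_{2j}}F_jK_2$ gives $K_2F_1=qF_1K_2$, $K_2F_2=F_2K_2$ (since $a_{22}=0$), and $K_2F_{12}=qF_{12}K_2$. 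Hence
\[
K_2^{-2}v=q^{-2\a-2\e_1}v.
\]
So $j_{\Va}^{-1}$ is diagonal in the bases $(v_i)$ and $(v_i'')$, with entries $\pm q^{-2\a-2\e_1}$. Its inverse $j_{\Va}$ is diagonal with entries $\pm q^{2\a+2\e_1}$. Each entry lies in $q^{2\a}\Q(q)$.
\end{enumerate}

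Composing, $\iota_X=(j_{\Va}\ot\id_{\Va^*})\circ\phi$. Tensoring $j_{\Va}$ with the identity and multiplying by a signed permutation matrix keeps every nonzero entry of the form $\pm q^{2\a}q^{2k}$ with $k\in\Z$. This is exactly the claim.

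The main point requiring care is the convention for $j_V$. The direction of the isomorphism must be such that the exponent comes out as $+2\a$, matching the pivot $g=K_2^{-2}$, and not $-2\a$. One should also check that the weight shifts of $F_2$ and $F_{12}$ are integral powers of $q$, so that no stray $q^{\a}$ factors appear. Both are confirmed by the direct computation in step 2; the super signs only affect signs, not the degree in $q^{\a}$.
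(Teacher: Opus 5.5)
Your proposal is correct and is essentially the paper's argument. The paper simply observes that the only coefficients of $\iota_X$ other than signs come from $j_{\Va}$, i.e.\ from the action of $g^{-1}=K_2^2$ on the weight basis of $\Va$; you make this explicit by computing the diagonal entries $\pm q^{2\a+2\e_1}$ and noting that the identification $(\Va\ot\Va^*)^*\cong\Va^{**}\ot\Va^*$ is a signed permutation. One harmless labeling slip: with $U=\Va$ and $W=\Va^*$ one has $W^*=\Va^{**}$ and $U^*=\Va^*$, not the other way round.
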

\begin{proof}
     This follows simply because the coefficients of $\iota_X$ come from the isomorphism $j_{\Va}:\Va^{**}\to \Va$ which is essentially the action of $g^{-1}=K_2^2$ on $\Va$, which has coefficients in $q^{2\a}\Q(q)$ in the standard basis.
\end{proof}

\medskip

The following analogue of \cite[Lemma 3.6]{LNV:Murasugi-sum} is straightforward to check.

\begin{lemma}
\label{lemma: coev lemma}
    We have $(\iota_X\ot (\lcoev_{\Va}\circ\rev_{\Va}))\circ\rcoev_X=\lcoev_{\Va}\ot\lcoev_{\Va}.$
\end{lemma}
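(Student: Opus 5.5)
Both sides are maps $\mathbb{1}\to X\ot X=\Va\ot\Va^*\ot\Va\ot\Va^*$, so the plan is to evaluate both sides on $1$ in the standard bases and compare them term by term. Write $(v_i)$ for the PBW basis of $\Va$ from Lemma \ref{lemma: PBW on V(n,a)} and $(v'_i)$ for its dual basis. The right-hand side is $\sum_{i,k}v_i\ot v'_i\ot v_k\ot v'_k$, since $\lcoev_{\Va}(1)=\sum_i v_i\ot v'_i$ is the usual super-vector space coevaluation.

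For the left-hand side I will first identify $X^*\cong \Va^{**}\ot\Va^*$ compatibly with the pivotal structure, so that the right coevaluation of $X$ is built out of those of $\Va$ and $\Va^*$. In a pivotal category, $\rcoev_{U\ot W}$ is obtained by nesting, $\rcoev_{U\ot W}=(\id_{W^*}\ot\rcoev_U\ot\id_W)\circ\rcoev_W$ (with $(U\ot W)^*\cong W^*\ot U^*$). For $X=\Va\ot\Va^*$ this gives
\[
\rcoev_X(1)=\sum_{i,j}\pm\, v_j^{**}\ot v'_i\ot K_2^2v_i\ot K_2^{2}(v'_j),
\]
where all pivot actions and signs come from the formula for $\rcoev$ in Subsection \ref{sub.piv}. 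Here it is cleaner to rewrite $\rcoev_{\Va^*}$ via $j_{\Va}$: one has $\rcoev_{\Va^*}=(\id\ot j_{\Va}^{-1})$-twisted $\lcoev_{\Va}$, i.e. $\rcoev_{\Va^*}(1)=\sum_j (j_{\Va}^{-1}(v_j))\ot v'_j$ up to the standard sign.

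Next I apply $\iota_X=j_{\Va}\ot\id$ to the first two factors. This turns $j_{\Va}^{-1}(v_j)$ back into $v_j$, so the outer pair becomes exactly $\lcoev_{\Va}$ placed in factors $1$ and $4$ once we track positions. In parallel, $\lcoev_{\Va}\circ\rev_{\Va}$ acts on the inner pair $v'_i\ot K_2^2v_i$ (factors $2,3$). Since $\rev_{\Va}(v'\ot v)=(-1)^{|v|}v'(K_2^{-2}v)$ is applied to $K_2^2 v_i$, the pivot cancels and the result is $\sum_i(\pm)\,v'_i(v_i)=\pm\dim_{\mathrm{s}}$-type sum. Here one must be careful: the map actually applied is $\rev$ on $\Va\ot\Va^*$ in factors 3,4 of $X\ot X$, not on the inner pair. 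So I will reorganize: $\lcoev_{\Va}\circ\rev_{\Va}$ acts on the second copy of $X$ (factors 3,4), pairing $K_2^2v_i$ with $K_2^2(v'_j)$ after $\rev$ inserts $K_2^{-2}$. This contracts to $\delta_{ij}$ (both pivots cancel, since $K_2^2(v'_j)=v'_j\circ K_2^{-2}$ on the dual). The contraction therefore glues the first coevaluation strand to the second, leaving $\sum_i v_i\ot v'_i$ in factors 1,2. A fresh $\lcoev_{\Va}$ is then produced in factors 3,4. Together these give $\lcoev\ot\lcoev$.

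Graphically, this is just the snake identity. The nested cap $\rcoev_X$ consists of two nested arcs. Applying $\rev\circ$ to the second $X$ closes the inner arc against the outer arc into a single arc, using a zig-zag (with $\iota_X$ supplying the needed $j_{\Va}$ to straighten it), and $\lcoev$ adds a new arc. I would present the proof as this picture, citing pivotal zig-zag identities, which makes it sign-free.

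The only real obstacle is bookkeeping of super-signs $(-1)^{|v_i|}$ and the $K_2^{\pm2}$ factors to confirm that they cancel exactly. I expect them to cancel because $\iota_X$ is precisely the pivotal identification, so the graphical calculus in the ribbon category $\CC$ guarantees the equality without further computation.
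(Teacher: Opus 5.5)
Your argument is correct and is exactly the routine verification the paper omits (it only says the lemma is ``straightforward to check''). The zig-zag identity $(\id_{\Va^*}\ot\rev_{\Va})\circ(\rcoev_{\Va}\ot\id_{\Va^*})=\id_{\Va^*}$ reduces the left side to $\big((j_{\Va}\ot\id_{\Va^*})\circ\rcoev_{\Va^*}\big)\ot\lcoev_{\Va}$, and $(j_{\Va}\ot\id_{\Va^*})\circ\rcoev_{\Va^*}=\lcoev_{\Va}$ holds on the nose, with no signs, from the paper's formulas for $\rcoev$ and $j_{\Va}$.

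Your explicit version has one bookkeeping slip. Once the $K_2^2$ on $v'_j$ has been absorbed into $j_{\Va}^{-1}(v_j)$, the contraction is $\rev_{\Va}(K_2^2v_i\ot v'_j)=(-1)^{|v_i|}\delta_{ij}$. Pairing with $K_2^2(v'_j)$, as you wrote, instead gives $(-1)^{|v_i|}v'_j(K_2^{-2}v_i)$, so the pivots do not all cancel there. That pivot factor must not be used twice.
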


\def\FC{F_{\CCgr}}
\def\TS{T_{\S}}

\def\FF{\overline{F}}

\def\JJ{\mathcal{J}_X}
\def\XX{X^{\ov{\e}}}
\subsection{Seifert surface formula}\label{subs: Seifert formula}
Let $L$ be an oriented link and let $\S$ be a Seifert surface for $L$, which for this section we assume is connected. Then $\S$ can be obtained from a single disk by attaching, say, $N$ 1-handles. After an isotopy, we can suppose $\S$ is obtained by taking an $N$-component bottom tangle $\TS$\footnote{This is almost the terminology of Habiro \cite{Habiro:bottom-tangles}, except that we do not require any condition on the placement of the endpoints of the components of $T$.}, i.e. a framed tangle in $\R^2\t[0,1]$ with all endpoints on $\R^2\t\{1\}$, and then attaching a disk on top, see Figure \ref{figure:Seifert}. Let $L_o$ be the $(1,1)$-tangle obtained by opening $L$ at $p$ as indicated in the same figure.
\begin{figure}[H]
    \centering
    \includegraphics[width=8.5cm]{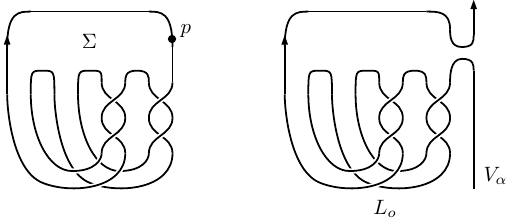}
    \caption{On the left is the Seifert surface $\S$ of $L$. On the right, we open $L=\p\S$ at $p$ to obtain a $(1,1)$-tangle $L_o$. This is colored with $\Va$ so the figure has to be interpreted as the RT invariant $F(L_o,\Va):\Va\to \Va$.}
       \label{figure:Seifert}
\end{figure}

\subsubsection{$F(L_o,\Va)$ from $F(\TS,X)$} We will express the invariant $F(L_o,\Va)$ (and hence $LG(L)$) in terms of $F(\TS,X)$ as follows. Suppose each component of $\TS$ is oriented to the right, that is, for any component $T_i$ of $\TS$ the orientation of $T_i$ is downwards at the leftmost endpoint of $T_i$ on $\R^2\t\{1\}$. Let $\e_1,\dots,\e_{2N} \in \{\pm 1\}$ be the signs at the endpoints of $\TS$ from left to right, where $\e_i=+1$ (resp. $\e_i=-1$) indicates upwards orientation  (resp. downwards). Let $X^{+1}=X$ and $X^{-1}=X^*$ and denote $\XX=X^{\e_1}\ot\dots\ot X^{\e_{2N}}$. Then the Reshetikhin-Turaev invariant of $\TS$ colored with $X$ has the form $$F(\TS,X):\C\to  \XX=X^{\e_1}\ot\dots\ot X^{\e_{2N}}.$$
Let  $\JJ:\XX\to X^{\ot 2N}$ be the isomorphism that applies $\iota_X$ to each $X^*$-tensor factor and $\id_X$ on each $X$-tensor factor. Let 
\begin{align}
    \label{eq: def of F bar}
    \FF(\TS,X)=\JJ\circ F(\TS,X):\C\to X^{\ot 2N}
\end{align}

 Then the invariant $F(L_o,\Va):\Va\to \Va$ is computed from $\FF(\TS,X)$ by
$$F(L_o,\Va)=(\id_{\Va}\ot \lev_{\Va}^{\ot 2N})\circ(\FF(\TS,X)\ot\id_{\Va}).$$
Here is a picture in the case of Figure \ref{figure:Seifert} (where $N=2$):
\begin{figure}[H]
    \centering
    \includegraphics[width=10cm]{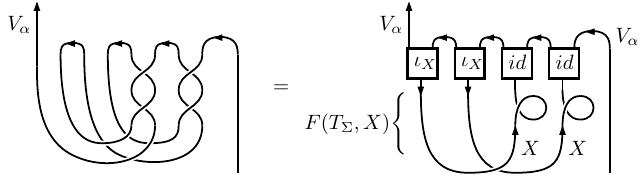}
\end{figure}
\def\ZZ{Z^{ev}}

\subsubsection{$F(\TS,X)$ from ``universal'' invariants} Let $\Uqsl_{\Q}$ be the $\Q(q)$-subalgebra of $\Uqsl$ generated by the $K_i,E_i,F_i$ for $i=1,2$. By Lemma 4.7 of \cite{GLN:genus-for-Lie-superalgebras} there is an element $$Z_{T}\in \Uqsl^{\ot N}_{\Q}$$ 
which can be thought as a ``universal'' invariant for the bottom tangle $T$. This is not exactly a universal invariant, which are usually defined for finite-dimensional ribbon Hopf algebras, but it is defined in a similar way using the fact that the braiding $c_{X,X}$ can be represented by left multiplication by an element in $U_q(\sll)_{\Q}^{\ot 2}$ (followed by a super-transposition). See \cite{GLN:genus-for-Lie-superalgebras} for more details.
\medskip

The element $Z_T$ determines $F(\TS,X)$ by left multiplication on $X^{\ot N}$ as follows: let $\ZZ_T\in \Uqsl^{\ot 2N}$ be the element obtained from $Z_{T}$ by inserting the unit $1_{\Uqsl}$ in all odd tensor factors. Let $L_{\ZZ_T}$ be the endomorphism of $(X^*\ot X)^{\ot N}$ that is left multiplication by $\ZZ_T$, so the units $1_{\Uqsl}$ in $Z_T^{ev}$ act on the $X^*$-factors and each tensor factor of $Z_T$ acts on the corresponding $X$-factor. Then
\begin{align*}
    F(\TS,X)= P_{X,X^*}\circ L_{\ZZ_T}\circ  \coev_X^{\ot N}
\end{align*}

\noindent where $\coev_X$ denotes the usual right coevaluation of super-vector spaces. Here $P_{X,X^*}$ is a permutation isomorphism of super-vector spaces (of $X$ and $X^*$ tensor factors) determined by the ordering of the components of $T$ and the way the endpoints of $T$ lie in $\R^2\t\{1\}$. Usually, there is no $R$-matrix involved in $P_{X,X^*}$ or pivot in $\coev_X$ as all these Hopf algebra elements were already included in $Z_T$. However, we prefer to have the categorical coevaluation $\rcoev_X$ in the bottom (which involves the pivot), so instead we will write

\begin{align}
\label{eq: RT of T in terms of universal}
    F(\TS,X)&= P_{X,X^*}\circ L_{(Z'_T)^{ev}}\circ  \rcoev_X^{\ot N}
\end{align}

\noindent where $Z'_T=Z_T\cdot g^{\ot N}\in \Uqsl^{\ot N}$ (recall $g=K_{2\rho}=K_2^{-2}$ for $\Uqsl$). Note that $\JJ\circ P_{X,X^*}=P_{X,X}\circ (\iota_X\ot \id_X)^{\ot N}$ where $P_{X,X}$ is the same permutation but applied to $X^{\ot N}$ so that
$\FF(\TS,X)$ can be written as $$\FF(\TS,X)=P_{X,X}\circ (\iota_X\ot \id_X)^{\ot N}\circ L_{(Z'_T)^{ev}}\circ \rcoev^{\ot N}$$
or graphically 
\begin{figure}[H]
    \centering
    \includegraphics[width=8cm]{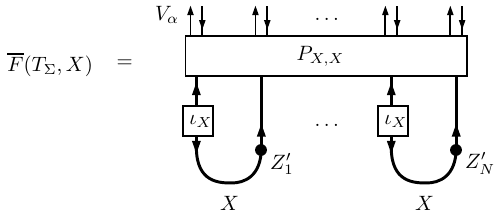}
\end{figure}
\noindent where we write $Z'_T=\sum Z'_1\ot\dots\ot Z'_N$. For the previous example:

\begin{figure}[H]
    \centering
    \includegraphics[width=10cm]{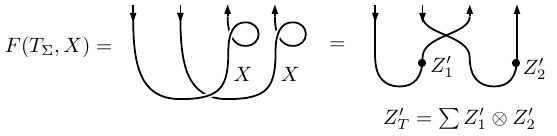}
\end{figure}

\medskip

Note that, since the quasi-R-matrix has degree zero as well as group-likes, it follows that $Z'_T$ has degree zero.
\medskip

To summarize this discussion:

\begin{proposition}
\label{prop: Seifert formula}
    If $\S$ is a connected Seifert surface of a link $L$ built from an $N$-component tangle $\TS$ as above, then the Links-Gould invariant of $L$ is determined by $\TS$ as
    $$LG(L;q,q^{2\a+1})\cdot\id_{V_{\a}}=(\id_{V_{\a}}\ot \lev_{V_{\a}}^{\ot 2N})\circ(\FF(\TS,X)\ot\id_{V_{\a}})$$
    where $\FF(\TS,X)=\JJ\circ F(\TS,X)$ is determined by left multiplication of the universal invariant $Z'_T$ acting on $X^{\ot N}$ as in (\ref{eq: RT of T in terms of universal}). 
\end{proposition}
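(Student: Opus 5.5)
The plan is to assemble the pieces already established, since Proposition \ref{prop: Seifert formula} is essentially a repackaging of the Reshetikhin--Turaev calculus. The order is: surface to tangle, tangle to $\FF(\TS,X)$, then universal element to $\FF(\TS,X)$.

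\emph{Step 1: from $\S$ to $L_o$.} Present $\S$ as a disk $D$ with the $N$ bands of $\TS$ attached below it. Cutting $L=\p\S$ at the point $p$ on the boundary of the top disk gives $L_o$. Isotope $L_o$ so that, along the edge of $D$, the boundary runs over the $2N$ band ends in order. Each band of $T_i$ contributes two parallel, oppositely oriented copies of the framed core $T_i$ (its two boundary edges). Colouring both edges of a band by $\Va$ is the same as colouring the core by $\Va\ot\Va^*=X$, by the standard cabling property of RT functors (doubling a framed strand equals colouring by the tensor product). Orientation of the core determines whether an endpoint carries $X$ or $X^*$. Along the top disk, the boundary arcs between consecutive band ends are caps. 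These caps pair the $\Va^*$ strand of one end with the $\Va$ strand of the next, or with the opened strand at $p$. Reading the picture, they become $\lev_{\Va}$ once every end is identified with $X$ rather than $X^*$. This identification is exactly what $\JJ$ does, via $\iota_X=j_{\Va}\ot\id$, because $X^*\cong\Va^{**}\ot\Va^*$ and the cap on a $\Va^{**}$ strand is turned into a cap on $\Va$ by the pivotal isomorphism $j_{\Va}$.

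\emph{Step 2: the formula for $LG$.} Step 1 yields $F(L_o,\Va)=(\id_{\Va}\ot\lev_{\Va}^{\ot 2N})\circ(\FF(\TS,X)\ot\id_{\Va})$. This is an equality of RT invariants, so the framing on $L$ is the one induced by $\S$. This surface framing has total writhe zero: the Seifert framing of each component is zero and the linking numbers between components cancel through the surface. The normalization $q^{2\a(\a+1)w}$ is therefore trivial, and $F(L_o,\Va)=LG\cdot\id_{\Va}$ by the remark in Section \ref{section: Links-Gould invariant}.

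\emph{Step 3: universal invariant.} I would invoke Lemma 4.7 of \cite{GLN:genus-for-Lie-superalgebras} to get $F(\TS,X)=P_{X,X^*}\circ L_{Z_T^{ev}}\circ\coev_X^{\ot N}$. I would then rewrite $\coev_X$ as $\rcoev_X$ by moving the pivot into $Z'_T=Z_T g^{\ot N}$. This is legitimate because $\rcoev_X$ differs from $\coev_X$ exactly by $g$ acting on the $X$ factor.

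\emph{Main obstacle.} The delicate part is sign and pivot bookkeeping in Steps 1 and 3. This means the super signs in $\tau$, checking that the $j_{\Va}$ produced by converting $X^*$ caps matches $\iota_X$ and not its inverse, and handling signs when $P_{X,X^*}$ commutes with $\JJ$. I would verify these locally on one band with each endpoint orientation, using the explicit formulas for $\rev$, $\rcoev$ and $j_V$ from Subsection \ref{sub.piv}.
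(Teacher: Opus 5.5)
Your proposal follows essentially the same route as the paper: cable each band of $\TS$ by $X=\Va\ot\Va^*$, convert the $X^*$-ends into $X$-ends with $\iota_X$ (this is $\JJ$), close up along the top disk with $\lev_{\Va}^{\ot 2N}$, and then apply Lemma 4.7 of \cite{GLN:genus-for-Lie-superalgebras}, absorbing the pivot into $Z'_T=Z_T\,g^{\ot N}$ so that $\rcoev_X$ appears at the bottom.

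The one thing to fix is the justification in Step 2 that the writhe vanishes. For a link with several components, the surface framing of a component is not zero. Push $L=\p\S$ off $\S$ along its normal to get $L^+$, which is disjoint from $\S$. Then $\mathrm{lk}(L_i^+,L)=L_i^+\cdot\S=0$, so the surface framing of $L_i$ is $-\sum_{j\neq i}\mathrm{lk}(L_i,L_j)$. For example, for a Hopf link bounding an annulus, each component has surface framing $-\mathrm{lk}(L_1,L_2)=\mp 1$. If each framing were zero, as you state, the total writhe would be $2\sum_{i<j}\mathrm{lk}(L_i,L_j)$, which is nonzero in general.

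The correct argument is that the total writhe of the surface-framed link equals $\mathrm{lk}(L,L^+)=L^+\cdot\S=0$. With this fix, the normalization factor $q^{2\a(\a+1)w}$ is indeed trivial; the paper simply asserts this point.
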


\subsection{The main proposition}

\def\maxdeg{\text{max-deg}}

Proposition \ref{prop: prop surface and coevaluations} below is the main ingredient behind the proof of our theorem. This is the analogue of \cite[Prop. 6.2]{LNV:Murasugi-sum}. For any $Z\in \Uq^{\ot N}$ we will denote
\begin{align}
\label{eq: def of F(Z,X)}
    F(Z,X)&= P_{X,X^*}\circ L_{Z^{ev}}\circ  \rcoev_X^{\ot N}.
\end{align}
Thus $F(\TS,X)=F(Z'_T,X)$, but note that $F(Z,X)$ is not a Reshetikhin-Turaev invariant in general. We will employ similar notation for $\FF(Z,X)$.

\def\cl{\text{cl}}

\begin{proposition}
\label{prop: prop surface and coevaluations}
    Let $\TS$ be an $N$-component framed, oriented bottom tangle. Then the Reshetikhin-Turaev operator invariant $\FF(\TS,X):\C\to X^{\ot 2N}$ satisfies
\begin{align}
\label{eq prop: FTX in terms of coev + B}
     \FF(\TS,X)=c(q,q^{\a})\cdot \lcoev_{\Va}^{\ot 2N}+B
\end{align}
   \noindent for a coefficient $c(q,q^{\a})\in \Q(q)[q^{\pm 2\a}]$ and an operator $B:\C\to X^{\ot 2N}$ with the following properties:
   \begin{enumerate}[(a)]
       \item The coefficients of $B(1)\in X^{\ot 2N}$ in the standard basis lie in $\Q(q)[q^{\pm 2\a}]$ and satisfy $\maxdeg_{q^{-2\a}}<N$ and  $\maxdeg_{q^{2\a}}\leq N$. 
       \item The coefficients of the closure $$\cl(B):=(\id_{\Va}\ot \lev_{\Va}^{\ot 2N})\circ (B\ot \id_{\Va}):\Va\to\Va$$
 (which also lie in $\Q(q)[q^{\pm 2\a}]$) satisfy $\maxdeg_{q^{\pm 2\a}}<N$.
 \item $\maxdeg_{q^{\pm 2\a}}c(q,q^{\a})= N$ (except if $c(q,q^{\a})=0$) and the coefficients of $q^{\pm 2\a N}$ in $c(q,q^{\a})$ coincide with the least/top coefficient of $LG(L;q,q^{2\a+1})$. 
   \end{enumerate}
    
\end{proposition}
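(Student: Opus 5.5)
Write $Z'_T=\sum Z'_1\ot\cdots\ot Z'_N$ and expand each tensor factor in the rearranged PBW basis of Lemma \ref{prop: rearranged PBW}. Each summand is then a $\Q(q)$-multiple of a product $K_1^{a_1}K_2^{a_2}E_1^aF_1^bF_{12}^{\e_1}F_2^{\e_2}E_{12}^{\e'_1}E_2^{\e'_2}$ in every factor. Acting on $X$, Lemma \ref{lemma: algebraic bound} bounds the $q^{-2\a}$-degree of a factor by $\e'_1+\e'_2\le 2$. Moreover, since $Z'_T$ has $L_R$-degree zero and $E_{12}E_2$ must be compensated in each factor, we should be able to show the following: the only way to reach $q^{-2\a}$-degree $2$ in a factor, modulo lower-degree terms, is the vector $F_{12}F_2E_{12}E_2$ times a group-like. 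Here the $E_1^aF_1^b$ part is forced, up to lower terms, to act by a scalar of degree zero, because $E_1,F_1$ act within $\Va$ through the $K_1$-weight and $\Va$ has $n=0$.

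So I would split $Z'_T=\lambda\,(F_{12}F_2E_{12}E_2)^{\ot N}\cdot(\text{group-likes})+Z''$. In the first term, every factor is replaced via the cutting Lemma \ref{lemma: ``cointegral'' cutting lemma} by $\ka(\a)\lcoev_{\Va}\circ\rev_{\Va}$ plus an error of degree $<2$. Each main term feeds into $\rcoev_X$ and $\iota_X$, and Lemma \ref{lemma: coev lemma} converts $(\iota_X\ot \lcoev\circ\rev)\circ\rcoev_X$ into $\lcoev_{\Va}\ot\lcoev_{\Va}$. The permutation $P_{X,X}$ permutes these coevaluation pairs, and on $\lcoev^{\ot 2N}$ it should act by a sign, with no braiding since it is a super-permutation. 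This gives $c(q,q^\a)\lcoev_{\Va}^{\ot 2N}$ with $c=\pm\lambda\,\ka(\a)^N\cdot(\text{pivot/group-like scalars})$.

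Everything else goes into $B$. Its $q^{-2\a}$-degree is $<2N$ in $Z'$ before $\iota_X$, which contributes $q^{2\a}$ per factor. Hence after $\JJ$ the total $q^{-2\a}$-degree is $<N$, and the $q^{2\a}$-degree is $\le N$, coming only from $\iota_X$ (Lemma \ref{lemma: coefficients of iotaX}). This gives (a).

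For (b), I would use the symmetry $\a\mapsto-\a-1$. Closing $B$ with $\lev^{\ot 2N}$ gives $\cl(B)=LG\cdot\id-c\cdot\cl(\lcoev^{\ot 2N})$, and $\cl(\lcoev^{\ot 2N})$ is a scalar independent of $\a$ (the super-dimension factors of $\lev\circ\lcoev$ are $\pm$ constants; they may need care, but are $\a$-free). Since both $LG$ and $\ka(\a)^N$ are $p\mapsto p^{-1}$ symmetric by Remark \ref{remark: symmetry of kappa alpha}, $\cl(B)$ is symmetric too. Its $q^{-2\a}$-degree is $<N$ by (a), so by symmetry its $q^{2\a}$-degree is also $<N$, which is (b). Then (c) follows: $\maxdeg\,\ka^N=N$, and comparing the coefficients of $q^{\pm2\a N}$ in $LG\cdot\id=c\cdot(\text{const})+\cl(B)$, using (b), identifies the extreme coefficients of $c$ with those of $LG$ (after absorbing the constant and the shift $p=q^{2\a+1}$).

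The main obstacle is the first step: proving that the top $q^{-2\a}$-degree part of $Z'_T$ acting on $X$ is exactly a scalar multiple of $\otimes(F_{12}F_2E_{12}E_2)$ up to group-likes. In particular, terms such as $E_1^aF_1^bF_{12}^{\e_1}F_2^{\e_2}E_{12}E_2$ with $(\e_1,\e_2)\ne(1,1)$ must be shown to vanish or drop degree. I would try to show that degree-zero weight forces $\e_1+\e_2=2$ in each factor after summing: the total weight of $Z'_T$ is zero, so if every factor has $E_{12}E_2$, the negative parts must total $-(2\a_1+... )$ appropriately. Any deficit must then be compensated by $F_1$'s, which on $\Va$ ($n=0$) act through odd vectors and lower the $q^{-2\a}$-degree. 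This would be checked directly on the 4-dimensional $\Va$.
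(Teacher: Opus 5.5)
Your outline has the same architecture as the paper's proof: rearranged PBW expansion, Lemma \ref{lemma: algebraic bound} to discard terms, the cutting lemma plus Lemma \ref{lemma: coev lemma} for the main term, and $p\mapsto p^{-1}$ symmetry for (b) and (c). Your treatment of (b) and (c) is fine; in fact $\cl(\lcoev_{\Va}^{\ot 2N})=\id_{\Va}$ exactly, by the zigzag identities. The gap is in the step you yourself flag, which is the heart of (a), and the mechanism you propose for it is wrong.

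First, no ``deficit compensated by $F_1$'s'' can occur. In a monomial $K_1^{a_1}K_2^{a_2}E_1^aF_1^bF_{12}^{\e_1}F_2^{\e_2}E_{12}E_2$, the $\a_2$-coefficient of the root degree is $2-\e_1-\e_2\geq 0$. This is because $K_i,E_1,F_1$ contribute nothing in the $\a_2$-direction, and $F_{12}^2=F_2^2=0$. Every PBW term of $Z'_T$ has total degree zero. Hence any term with $E_{12}E_2$ in all $N$ factors must have $\e_1=\e_2=1$ in every factor. So the top part is $Z^{top}=\sum\bigotimes_i f_i\,F_{12}F_2E_{12}E_2$, with $f_i$ a polynomial in $K_1^{\pm1},K_2^{\pm1},E_1,F_1$. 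This is a pure grading argument, and $F_1$ plays no role in it.

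What remains are the prefactors $f_i$, and here your justification fails: they are neither group-likes nor scalars.
\begin{itemize}
\item Only the total $\a_1$-degree vanishes, so a single factor may carry $E_1^aF_1^b\neq 1$.
\item $E_1,F_1$ act nontrivially on $\Va$ even though $n=0$. For example $F_1F_2v_0=F_{12}v_0$; over $U_q(\mathfrak{sl}_2)$, $\Va$ is trivial $\oplus$ two-dimensional $\oplus$ trivial.
\item $F_1$ does not lower the $q^{-2\a}$-degree either: by Lemma \ref{lemma: algebraic bound}, $f_i$ acts on $X$ with $\Q(q)$-coefficients.
\end{itemize}
The missing idea is to let $f_i$ act after applying the cutting lemma. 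One gets $f_iF_{12}F_2E_{12}E_2|_X=\ka(\a)\,f_i\circ\lcoev_{\Va}\circ\rev_{\Va}+f_i\circ\ovB$. Since $\lcoev_{\Va}$ is a module morphism, $f_i\circ\lcoev_{\Va}=\e(f_i)\,\lcoev_{\Va}$. Meanwhile $f_i\circ\ovB$ keeps degree $<2$, because $f_i$ has $\Q(q)$-coefficients.

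This gives $c=\sum\prod_i\e(f_i)\,\ka(\a)^N\in\Q(q)\,\ka(\a)^N$, which is exactly what your symmetry argument for (b) needs. It also shows that terms with $E_1^aF_1^b\neq 1$ in some factor contribute only to $B$. Without these two observations, your split $Z'_T=\lambda(F_{12}F_2E_{12}E_2)^{\ot N}\cdot(\text{group-likes})+Z''$ leaves part of $Z''$ uncontrolled, so (a) is not established.
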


\begin{proof} The core is to prove $(a)$. We follow the same steps as in \cite{LNV:Murasugi-sum}.
\medskip

\textbf{Step 1: Apply algebraic bounds to $Z'_T$.} We begin by applying (\ref{eq: RT of T in terms of universal}) to the LHS of (\ref{eq prop: FTX in terms of coev + B}):
\begin{align*}
     \FF(\TS,X)=\JJ\circ F(\TS,X), \ \ F(\TS,X)=P_{X,X^*}\circ L_{(Z'_T)^{ev}}\circ  \rcoev_X^{\ot N}
\end{align*}

Applying the algebraic bound of Lemma \ref{lemma: algebraic bound} $N$ times we see that the coefficients of $L_{(Z'_T)^{ev}}\circ  \rcoev_X^{\ot N}$, and hence of $F(\TS,X)$, are polynomials in $q^{-2\a}$ with $\maxdeg_{q^{-2\a}}\leq 2 N$. Note that since there are exactly $N$ repetitions of $\iota_X$ in $\JJ$, the operator $\JJ$ has coefficients in $q^{2\a N}\Q(q)$ by Lemma \ref{lemma: coefficients of iotaX} (this has the effect of ``symmetrizing'' the powers of $q^{-2\a}$ in $F(\TS,X)$). This proves that the coefficients of $\FF(\TS,X)$ are Laurent polynomials in $q^{2\a}$ with $\maxdeg_{q^{\pm 2\a}}\leq  N$\footnote{This immediately implies the genus bound of \cite{KT:Links-Gould} and it is the $\sl(2|1)$-case of the proof of the genus bound given in \cite{GLN:genus-for-Lie-superalgebras} for all typical representations of type I Lie superalgebras.}. Our purpose then is to pin down the coefficient of $q^{-4\a N}$ in $F(\TS,X)$. The idea will be to expand $Z'_T$ in the (tensor product) PBW basis of $\Uqsl_{\Q}^{\ot N}$ and see which terms contribute (and how) to the power $q^{-4\a N}$ in $F(\TS,X)$.
\medskip

\def\uq{\mathfrak{u}_q}

\textbf{Step 2: Expand $Z'_T$ in PBW and discard low odd-E-powers.} So let's expand $Z'_T$ in the PBW basis. We assume the PBW basis of $\Uqsl_{\Q}^{\ot N}$ has been arranged as in Lemma \ref{prop: rearranged PBW}. One can write $$Z'_T=Z^{top} + Z^{low}$$
where $Z^{low}$ is the sum of all terms in the PBW-expansion of $Z'_T$ having some tensor factor of the form $K_1^{a_1}K_2^{a_2}E_1^{a}F_1^b F_{12}^{\e_1}F_{2}^{\e_2}E_{12}^{\e_1'}E_{2}^{\e_2'}$ with $\e'_1+\e'_2<2$ and $Z^{top}$ is an element of the form $$Z^{top}=\bigotimes_{i=1}^N h_i(K,F,E_1)E_{12}E_2$$
where $h_i(K,F,E_1)$ is a polynomial in $K_1^{\pm 1}, K_2^{\pm 1},E_1, F_1, F_{12},F_{2}$ with $\Q(q)$-coefficients for each $i=1,\dots,N$. Note that by Lemma \ref{lemma: algebraic bound}, $Z^{low}$ does not contribute to the terms of highest degree in $q^{-2\a}$, more precisely, $F(Z^{low},X)$ has coefficients which are polynomials in $q^{-2\a}$  with $\maxdeg_{q^{-2\a}}< 2N$. Hence
$$\FF(\TS,X)=\FF(Z'_T,X)=\FF(Z^{top},X)+\FF(Z^{low},X)$$
where all coefficients of $\FF(Z^{low},X)\in X^{\ot 2N}$ are Laurent polynomials in $q^{2\a}$ with $\maxdeg_{q^{-2\a}}< N$ and $\maxdeg_{q^{2\a}}\leq N$.
\medskip

\textbf{Step 3: The top odd-E-power.} It is thus enough to prove
\begin{align}
\label{eq proof prop: FTX in terms of coev + B'}
     \FF(Z^{top},X)=c(q,q^{\a})\cdot \lcoev_{\Va}^{\ot 2N}+B'
\end{align}

\noindent where $B'$ satisfies the same bounds as $B$ given in $(a)$. The crucial point is to note that, since $Z'_T$ has $L_R$-degree zero and $F_2^2=0=F_{12}^2$, $Z^{top}$ must have the form $$Z^{top}=\bigotimes_{i=1}^N f_i(K,E_1,F_1) F_{12}F_2E_{12}E_2$$
where each $f_i=f_i(K,E_1,F_1)$ is a polynomial in $K_1^{\pm 1}, K_2^{\pm 1}, E_1,F_1$.

\medskip

\textbf{Step 4: Applying the cutting lemma.} Now, by Lemma \ref{lemma: ``cointegral'' cutting lemma}, for each $i=1,\dots, N$ we have
\begin{align*} f_i\cdot F_{12}F_2E_{12}E_2|_{X}&=f_i\cdot (\ka(\a)\cdot\lcoev_{\Va}\circ\rev_{\Va} + \ovB)\\
    &=\e(f_i)\cdot \ka(\a)\cdot \lcoev_{\Va}\circ\rev_{\Va} + B'_i
\end{align*}
where $B'_i$ is an operator with coefficients that are polynomials in $q^{-2\a}$ satisfying $\maxdeg_{q^{-2\a}}<2$ (note that $f_i=f_i(K,E_1,F_1)$ acts by $\Q(q)$-coefficients by Lemma \ref{lemma: algebraic bound} so $B'_i:=f_i|_X\circ \ov{B}$ has the property if and only if $\ovB$ does). We used that $\lcoev_{\Va}$ is a $\Uqsl$-morphism so that $f_i|_X\circ\lcoev_{\Va}=\e(f_i)\cdot\lcoev_{\Va}$, where $\e$ is the counit of $\Uqsl$.

\medskip
Thus, writing $Z^{top}= Z^{top}_1\ot\dots\ot Z^{top}_N$ it follows that
\begin{figure}[H]
    \centering
    \includegraphics[width=14cm]{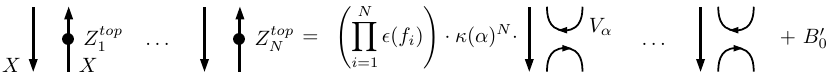}
\end{figure}

\noindent where $B'_0$ is an operator with coefficients polynomials in $q^{-2\a}$ with $\maxdeg_{q^{-2\a}}<2 N$. Thus, letting $$c(q,q^{\a})=\prod_{i=1}^N\e(f_i)\cdot \ka(\a)^N$$ and $$B'=P_{X,X}\circ (\iota_X\ot \id_X)^{\ot N}\circ B'_0\circ \rcoev_X^{\ot N},$$ we get
\begin{figure}[H]
    \centering
    \includegraphics[width=16cm]{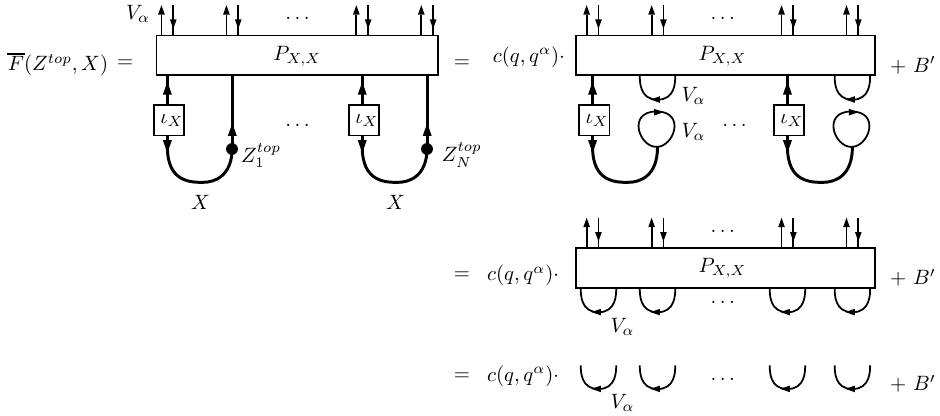}
\end{figure}

Here we used Lemma \ref{lemma: coev lemma} in the third equality while the fourth is obvious ($P_{X,X}$ is just a permutation isomorphism). Since $P_{X,X}$ and $\rcoev_X^{\ot N}$ have $\Q$-coefficients and $(\iota_X\ot \id_X)^{\ot N}$ has coefficients in $q^{2\a N}\Q(q)$, from what we said about $B'_0$ it follows that $B'$
has coefficients with $\maxdeg_{q^{-2\a}}<N, \maxdeg_{q^{2\a}}\leq N$. Summing up, we get
\begin{figure}[H]
    \includegraphics[width=13cm]{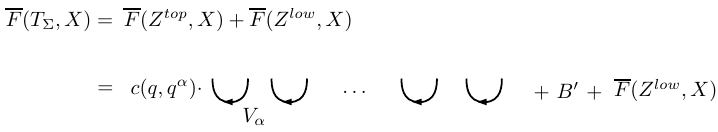}
\end{figure}
\noindent and $B:=B'+\FF(Z^{low},X)$ has coefficients with $\maxdeg_{q^{-2\a}}< N, \maxdeg_{q^{2\a}}\leq N$ since both $B'$ and $\FF(Z^{low},X)$ have this property. This proves $(a)$.
\medskip

We now prove $(c)$. Applying $\id_{\Va}\ot \lev_{\Va}^{2N}$ to (\ref{eq prop: FTX in terms of coev + B}) and using Proposition \ref{prop: Seifert formula} (on the LHS) gives $$LG(L;q,q^{2\a+1})\cdot\id_{\Va}=c(q,q^{\a})\cdot\id_{\Va}+(\id_{\Va}\ot \lev_{\Va}^{2N})\circ (B\ot \id_{\Va}).$$
Since the coefficients of $B$ satisfy $\maxdeg_{q^{-2\a}}<N$ from $(a)$ it follows that $LG(L)$ and $c(q,q^{\a})$ have the same coefficient of $q^{-2\a N}$. But since $LG(L)$ and $c(q,q^{\a})$ have the same symmetry property, namely invariance under $p\mapsto p^{-1}$ if $p=q^{2\a+1}$ (since $\ka(\a)$ does as noted in Remark \ref{remark: symmetry of kappa alpha} and $c(q,q^{\a})$ is a power of $\ka(\a)$ up to a coefficient in $\Q(q)$), it follows that $LG(L)$ and $c(q,q^{\a})$ have the same coefficient of $q^{2\a N}$ as well, therefore proving $(c)$. From this, it follows that the coefficients of $$(LG(L;q,q^{2\a+1})-c(q,q^{\a}))\cdot\id_{\Va}=\cl(B)$$
satisfy $\maxdeg_{q^{\pm 2\a}}<N$ which implies $(b)$.
\end{proof}

Let $X'=\Va^*\ot \Va$. By turning Proposition \ref{prop: prop surface and coevaluations} upside-down we also obtain: 

\begin{proposition}
\label{prop: F(TS,X) as coev but upside-down}
    Let $T_{\S'}$ be a $N'$-component (framed, oriented) top tangle. Then the operator invariant $\FF(T_{\S'},X'):(X')^{\ot 2N'}\to \C$ satisfies $$\FF(T_{\S'},X')=c'(q,q^{\a})\cdot\lev_{\Va}^{\ot 2N'}+B'$$
   \noindent for some $c'(q,q^{\a})\in \Q(q)[q^{\pm 2\a}]$ and a $B'$ with the following properties:
   \begin{enumerate}[(a)]
       \item $B'$ evaluates any standard basis vector of $(X')^{\ot 2N'}$ to a Laurent polynomial in $q^{2\a}$ with $\maxdeg_{q^{2\a}}<N'$ and $\maxdeg_{q^{-2\a}}\leq  N'$.
       \item The coefficients of $\cl(B')=(\id_{\Va}\ot B')\circ (\lcoev_{\Va}^{\ot 2N'}\ot\id_{\Va}):\Va\to \Va$
       satisfy $\maxdeg_{q^{\pm 2\a}}<N'$.
       \item $\maxdeg_{q^{\pm 2\a}}c'(q,q^{\a})= N'$ (except if $c'(q,q^{\a})=0$) and the coefficients of $q^{\pm 2\a N'}$ in $c'(q,q^{\a})$ coincide with the least/top coefficient of $LG(L';q,q^{2\a+1})$ where $L'=\p\S'$.
   \end{enumerate}

\end{proposition}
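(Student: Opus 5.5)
The plan is to deduce the statement from Proposition \ref{prop: prop surface and coevaluations} by a symmetry of the Reshetikhin--Turaev functor, rather than redoing the PBW analysis. The top tangle $T_{\S'}$ is the image of a bottom tangle $T_{\S'}^{\vee}$ under the reflection $\R^2\t[0,1]\to\R^2\t[0,1]$, $(x,t)\mapsto(x,1-t)$, possibly followed by a mirror and orientation reversal. The surface $\S'$ is likewise the image of a surface $\S'^{\vee}$ obtained by attaching a disk to $T_{\S'}^{\vee}$, and $\p\S'^{\vee}$ is related to $L'$ by mirror and/or reversal. Instead of a geometric reflection, I would use the algebraic form of this symmetry: the \emph{dual (transpose) functor}. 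Taking transposes with respect to the pairing given by $\lev$ sends $F(T,V)$ to $F(T^{\dagger},V^*)$, where $T^{\dagger}$ is $T$ turned upside down with orientations reversed. Because $\Va^*\cong V_{-\a-1}$, coloring with $X'=\Va^*\ot\Va$ corresponds to coloring the reflected bottom tangle with $X$, but with $\a$ replaced by $-\a-1$.

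Concretely, the steps are as follows. First, write $\FF(T_{\S'},X')$ as the dual map of $\FF(T^{\dagger},X_{-\a-1})$ composed with the appropriate $\iota$-isomorphisms. Here $X_{-\a-1}=V_{-\a-1}\ot V_{-\a-1}^*\cong \Va^*\ot\Va^{**}\cong X'$, via $j_{\Va}$. Second, apply Proposition \ref{prop: prop surface and coevaluations} to $T^{\dagger}$ with parameter $-\a-1$. This gives
\begin{align*}
\FF(T^{\dagger},X_{-\a-1})=c(q,q^{-\a-1})\,\lcoev^{\ot 2N'}+B,
\end{align*}
with the degree bounds of (a) holding in $q^{-2(-\a-1)}=q^{2\a+2}$. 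Third, transpose this identity. The dual of $\lcoev$ is $\lev$ up to the identifications, so we obtain $c'\cdot\lev_{\Va}^{\ot 2N'}+B'$ with $c'(q,q^{\a})=c(q,q^{-\a-1})$. The substitution $\a\mapsto-\a-1$ sends $q^{2\a}\mapsto q^{-2}q^{-2\a}$, which swaps the roles of $q^{2\a}$ and $q^{-2\a}$ up to $\Q(q)$-factors. This is exactly why the inequalities in (a) are interchanged: we get $\maxdeg_{q^{2\a}}<N'$ and $\maxdeg_{q^{-2\a}}\le N'$.

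Fourth, (b) and (c) then follow. The closure $\cl(B')$ is the transpose of $\cl(B)$ under the substitution, and the condition $\maxdeg_{q^{\pm2\a}}<N'$ is symmetric, so it is preserved. For (c), $c$ has top and bottom coefficients equal to those of $LG(\p\S'^{\vee})$. Under $p\mapsto p^{-1}$, which is what $\a\mapsto-\a-1$ induces since $p=q^{2\a+1}$, together with the symmetry $LG(L,q,p)=LG(L,q,p^{-1})$, these become the coefficients of $LG(L')$. This uses that $\p\S'^{\vee}$ and $L'$ differ by orientation reversal, which is realized by the duality $\Va\mapsto\Va^*$ and so leaves $LG$ unchanged.

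The main obstacle is bookkeeping. One must check that the $\iota$-isomorphisms and the pivotal signs line up, so that the transpose of $\lcoev_{V_{-\a-1}}$ under $V_{-\a-1}\cong\Va^*$ really is $\lev_{\Va}$ with no stray power of $q^{2\a}$. One must also check that the standard bases correspond, so that the degree statements transfer coefficientwise. If this matching proves messy, the fallback is to rerun the proof of Proposition \ref{prop: prop surface and coevaluations} directly for top tangles. In that version $Z'_T$ acts on the right via evaluations, and the cutting lemma is used in its transposed form $F_{12}F_2E_{12}E_2|_{X'}$. This is obtained by applying the antipode to Lemma \ref{lemma: ``cointegral'' cutting lemma}, which produces $\lcoev\circ\rev$ replaced by its dual.
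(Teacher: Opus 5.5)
Your proposal is correct and is essentially the paper's argument: the paper only says that this proposition follows by turning Proposition \ref{prop: prop surface and coevaluations} upside-down, and your route (the categorical dual, i.e.\ rotation by $\pi$ in the projection plane, together with orientation reversal and $\Va^*\cong V_{-\a-1}$) is a precise way of doing that. The bookkeeping you flag does work out. The $j_{\Va}$'s coming from the rotation cancel against those coming from the orientation reversal. Moreover, the weight spaces of $\Va$ are one-dimensional, so the identification $V_{-\a-1}\cong\Va^*$ is diagonal in the standard bases, with $q^{\a}$-exponents that are linear in the weight; these exponents cancel on the weight-zero vectors, which is where the module maps $B$ and $B'$ are supported.
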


\subsection{Proof of Theorem \ref{Theorem: main theorem for Links-Gould} for connected $\S$}\label{subs: proof connected case} Now the argument is exactly the same as in \cite{LNV:Murasugi-sum}.
 Suppose $\S_1,\S_2$ are Murasugi-summed along a $2n$-gon to get the surface $\S$ and let $L=\p\S$. This $2n$-gon is topologically a disk $D$ which we can suppose lies in $\R^2\t \{0\}$ and, after sliding handles if necessary, we can suppose that both $\S_1$ and $\S_2$ are obtained by attaching 1-handles to this disk. See Figure \ref{fig:argument-1}.
 
 \begin{figure}[H]
     \includegraphics[width=11cm]{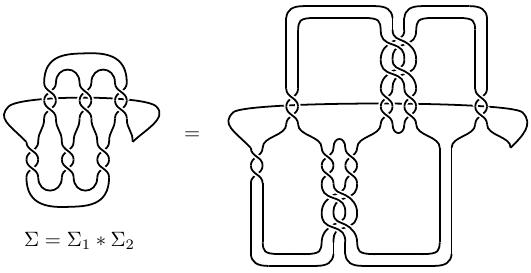}
     \caption{On the left is a Murasugi-sum of two pretzel surfaces $\S_1=\S_2=P(-2,-2,-2)$ along a hexagon. On the right, the rightmost handles of $\S_1,\S_2$ haven been slided to land on the Murasugi-sum hexagon.}
     \label{fig:argument-1}
 \end{figure}

 Suppose $\S_1$ lies below $D$ and $\S_2$ lies above, let $T_{\S_1}, T_{\S_2}$ be the corresponding tangles and let $N,N'$ be their number of components respectively. 
 Then, to compute $LG(L)$ we can open $L$ at a point of $\p D$ away from the 1-handles so that $F(L_o):\Va\to\Va$ can be written as
    \begin{align*}
        F(L_o,\Va)=(\id_{\Va}\ot \FF(T_{\S_2}, X'))\circ I\circ (\FF(T_{\S_1},X)\ot \id_{\Va})
    \end{align*}
    where $X'=\Va^*\ot\Va, X=\Va\ot\Va^*$ and the operator $I:X^{\ot 2N}\ot\Va\to\Va\ot (X')^{2N'}$ is a tensor product of left evaluations and left coevaluations (as well as identities $\id_{\Va}$ and $\id_{\Va^*}$) as specified by the (oriented) endpoints of $T_{\S_1}$ and $T_{\S_2}$. See Figure \ref{fig:argument-2}
    \begin{figure}[H]
     \includegraphics[width=11cm]{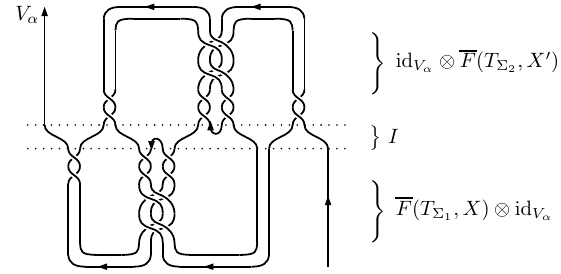}
     \caption{The link on the RHS of Figure \ref{fig:argument-1} is cut-open on the Murasugi-sum polygon. The result is a composition of morphisms as indicated. In this specific case, $I=\id_{\Va}\ot \id_{X'}\ot \lev_{\Va}\ot \id_{\Va^*}\ot \lcoev_{\Va}\ot\id_{X}\ot \id_{\Va}$.}
     \label{fig:argument-2}
 \end{figure}
 Applying Propositions \ref{prop: prop surface and coevaluations} and \ref{prop: F(TS,X) as coev but upside-down} to $\FF(T_{\S_1},X)$ and $\FF(T_{\S_2},X')$, we get:
\begin{align*}
    F(L_o,\Va)&= cc'\cdot \id_{\Va}+c'\cdot\cl(B)+c\cdot \cl(B')+(\id_{\Va}\ot B')I(B\ot \id_{\Va}) &
\end{align*}
where $c=c(q,q^{\a}),c'=c'(q,q^{\a})$. By $(a)$ and $(b)$ of Propositions \ref{prop: prop surface and coevaluations} and \ref{prop: F(TS,X) as coev but upside-down} it follows that the top term of $F(L_o,\Va)$ is the top term of $cc'\cdot \id_{\Va}$. But by $(c)$ of the same propositions, this is indeed the product of the top terms of the LG invariants of $L_1$ and $L_2$, therefore proving Theorem \ref{Theorem: main theorem for Links-Gould} whenever $\S_1,\S_2$ are connected.

\def\Sz{\S^{(0)}}
\def\So{\S^{(1)}}
\def\St{\S^{(2)}}

\subsection{Proof of Theorem \ref{Theorem: main theorem for Links-Gould} for disconnected $\S$}\label{subs: proof disconnected case} The disconnected case can be deduced from the connected case using equation (5) from \cite{LNV:plumbing-multiplicative} which implies, for any $\S$, 
\begin{align}
\label{eq: disconnected in terms of connected}
    2\top(\p\S,q)=q\top(\p\Sz,q)-(q^{-1}+q)\top(\p\So,q)+q^{-1}\top(\p\St,q)
\end{align}

\noindent where $\Sz,\So,\St$ are surfaces obtained from $\S$ by attaching a band to $\S$ with no twist, 1 twist, and 2 twists respectively. So suppose $\S$ is a Murasugi-sum of $\S_1$ and $\S_2$ and suppose that $\S_1$ is disconnected, say with $k$ components, and suppose $\S_2$ is connected. Suppose the theorem holds when Murasugi-summing a surface with $\leq k-1$ components with a connected surface. Attach a band to two different components of $\S_1$ and suppose the band is away from the Murasugi-sum. Then $\S^{(i)}=\S_1^{(i)}*\S_2$ for each $i=0,1,2$ so that
\begin{align*}
    2\top(\p\S)&=q\top(\p\Sz)-(q^{-1}+q)\top(\p\So)+q^{-1}\top(\p\St)\\
   &= q\top(\p \S_1^{(0)})\top(\p\S_2)-(q^{-1}+q)\top(\p\S_1^{(1)})\top(\p\S_2)\\
&+q^{-1}\top(\p\S_1^{(2)})\top(\p\S_2)\\
    &=2\top(\p\S_1)\top(\p\S_2)
\end{align*}
where the second equality is by the induction hypothesis and the third is by (\ref{eq: disconnected in terms of connected}) applied to $\S_1$. This completes the induction and by a similar argument, one can prove the theorem for disconnected $\S_2$.


\bibliographystyle{amsplain}
\bibliography{z-referencesabrv2}

@Article{Yamane:quantized-superalgebras-R-matrices,
 Author = {Yamane, Hiroyuki},
 Title = {Quantized enveloping algebras associated with simple {Lie} superalgebras and their universal {{\(R\)}}-matrices},
 FJournal = {Publications of the Research Institute for Mathematical Sciences, Kyoto University},
 Journal = {Publ. Res. Inst. Math. Sci.},
 ISSN = {0034-5318},
 Volume = {30},
 Number = {1},
 Pages = {15--87},
 Year = {1994},
 Language = {English},
 DOI = {10.2977/prims/1195166275},
 zbMATH = {621966},
 Zbl = {0821.17005}
}

@Article{Khoroshkin-Tolstoi:universal-R-matrix-superalgebras,
 Author = {Khoroshkin, S. M. and Tolsto{\u{\i}}, V. N.},
 Title = {Universal {{\(R\)}}-matrix for quantized (super)algebras},
 FJournal = {Communications in Mathematical Physics},
 Journal = {Commun. Math. Phys.},
 ISSN = {0010-3616},
 Volume = {141},
 Number = {3},
 Pages = {599--617},
 Year = {1991},
 Language = {English},
 DOI = {10.1007/BF02102819},
 zbMATH = {25680},
 Zbl = {0744.17015}
}

@article{Kac:Lie-superalgebras,
	Author = {Kac, V. G.},
	Doi = {10.1016/0001-8708(77)90017-2},
	Fjournal = {Advances in Mathematics},
	Issn = {0001-8708},
	Journal = {Adv. Math.},
	Language = {English},
	Pages = {8--96},
	Title = {Lie superalgebras},
	Volume = {26},
	Year = {1977},
	Zbl = {0366.17012},
	Zbmath = {3570630}}

@article{Garoufalidis-Li:patterns-of-V2,
	Author = {Stavros Garoufalidis and Shana Yunsheng Li},
	Title = {Patterns of the ${V}_2$-polynomial of knots}}

@article{Garoufalidis-Kashaev:multivariable,
	Author = {Stavros Garoufalidis and Rinat Kashaev},
	Title = {Multivariable knot polynomials from braided {H}opf algebras with automorphisms}}

@misc{Gabai:Murasugi2,
	Author = {Gabai, David},
	Howpublished = {Combinatorial methods in topology and algebraic geometry, {Proc}. {Conf}. {Hon}. {A}. {M}. {Stone}, {Rochester}/{N}.{Y}. 1982, {Contemp}. {Math}. 44, 93-100 (1985).},
	Language = {English},
	Title = {The {Murasugi} sum is a natural geometric operation. {II}},
	Year = {1985},
	Zbl = {0584.57003},
	Zbmath = {3935984}}

@misc{Gabai:Murasugi1,
	Author = {Gabai, David},
	Doi = {10.1090/conm/020/718138},
	Howpublished = {Contemp. {Math}. 20, 131-143 (1983).},
	Language = {English},
	Title = {The {Murasugi} sum is a natural geometric operation},
	Year = {1983},
	Zbl = {0524.57004},
	Zbmath = {3830877}}

@article{LG:two-variable,
	Author = {Links, J. R. and Gould, M. D.},
	Doi = {10.1007/BF00420752},
	Fjournal = {Letters in Mathematical Physics},
	Issn = {0377-9017},
	Journal = {Lett. Math. Phys.},
	Language = {English},
	Number = {3},
	Pages = {187--198},
	Title = {Two variable link polynomials from quantum supergroups},
	Volume = {26},
	Year = {1992},
	Zbl = {0777.57005},
	Zbmath = {109515}}

@Misc {KT:Links-Gould,
    AUTHOR = {Kohli, Ben-Michael and Tahar, Guillaume},
     TITLE = {A lower bound for the genus of a knot using the {L}inks-{G}ould invariant},
     year = {2023},
      NOTE = {to appear in Quantum Topol.},
}

@article{GPT:modified,
	Author = {Nathan Geer and Bertrand Patureau-Mirand and Vladimir Turaev},
	Doi = {10.1112/s0010437x08003795},
	Journal = {Compositio Mathematica},
	Month = {jan},
	Number = {1},
	Pages = {196--212},
	Publisher = {Wiley},
	Title = {Modified quantum dimensions and re-normalized link invariants},
	Url = {https://doi.org/10.1112%2Fs0010437x08003795},
	Volume = {145},
	Year = 2009}

@article{Habiro:bottom-tangles,
	Author = {Habiro, Kazuo},
	Doi = {10.2140/agt.2006.6.1113},
	Fjournal = {Algebraic \& Geometric Topology},
	Issn = {1472-2747},
	Journal = {Algebr. Geom. Topol.},
	Mrclass = {57M27 (18D10 57M25)},
	Pages = {1113--1214},
	Title = {Bottom tangles and universal invariants},
	Url = {https://mathscinet.ams.org/mathscinet-getitem?mr=2253443},
	Volume = {6},
	Year = {2006}}

@article{CGP:non-semisimple,
	Author = {Costantino, Francesco and Geer, Nathan and Patureau-Mirand, Bertrand},
	Doi = {10.1112/jtopol/jtu006},
	Fjournal = {Journal of Topology},
	Issn = {1753-8416},
	Journal = {J. Topol.},
	Mrclass = {57M27 (17B37)},
	Mrreviewer = {Leandro Vendramin},
	Number = {4},
	Pages = {1005--1053},
	Title = {Quantum invariants of 3-manifolds via link surgery presentations and non-semi-simple categories},
	Url = {https://mathscinet-ams-org.rproxy.sc.univ-paris-diderot.fr/mathscinet-getitem?mr=3286896},
	Volume = {7},
	Year = {2014}}

@article{CGPT,
	Author = {F. Costantino and N. Geer and B. Patureau-Mirand and V. Turaev},
	Journal = {arXiv:1809.07991},
	Title = {Kuperberg and {T}uraev-{V}iro Invariants in Unimodular Categories},
	Year = {2018}}

@article{RT1,
	Author = {Reshetikhin, N. and Turaev, V. G.},
	Fjournal = {Communications in Mathematical Physics},
	Issn = {0010-3616},
	Journal = {Comm. Math. Phys.},
	Mrclass = {57M25 (16W30 17B35)},
	Mrreviewer = {Louis H. Kauffman},
	Number = {1},
	Pages = {1--26},
	Title = {Ribbon graphs and their invariants derived from quantum groups},
	Url = {https://mathscinet-ams-org.rproxy.sc.univ-paris-diderot.fr/mathscinet-getitem?mr=1036112},
	Volume = {127},
	Year = {1990}}

@article{de-Wit:automatic-evaluation-LG,
 author = {De Wit, David},
 title = {Automatic evaluation of the {Links}-{Gould} invariant for all prime knots of up to 10 crossings},
 fjournal = {Journal of Knot Theory and its Ramifications},
 journal = {J. Knot Theory Ramifications},
 issn = {0218-2165},
 volume = {9},
 number = {3},
 pages = {311--339},
 year = {2000},
 language = {English},
 doi = {10.1142/S0218216500000153},
 zbMATH = {1773653},
 Zbl = {0999.57014}
}

@article{LNV:genus-unrolled,
 author = {Neumann, Daniel L{\'o}pez and van der Veen, Roland},
 title = {Genus bounds from unrolled quantum groups at roots of unity},
 fjournal = {Journal of the London Mathematical Society. Second Series},
 journal = {J. Lond. Math. Soc., II. Ser.},
 issn = {0024-6107},
 volume = {112},
 year = {2025}

}

@article{LNV:Murasugi-sum,
 Author = {Daniel L\'opez Neumann and Roland van der Veen},
 Title = {Top coefficients of {ADO} invariants are {M}urasugi-sum multiplicative},
 Year = {2026},
 Journal = {arXiv:2610.08638},
}

@article{MP:skein-star-product,
 author = {Murasugi, Kunio and Przytycki, Jozef H.},
 title = {The skein polynomial of a planar star product of two links},
 fjournal = {Mathematical Proceedings of the Cambridge Philosophical Society},
 journal = {Math. Proc. Camb. Philos. Soc.},
 issn = {0305-0041},
 volume = {106},
 number = {2},
 pages = {273--276},
 year = {1989},
 language = {English},
 doi = {10.1017/S0305004100078099},
 zbMATH = {4214956},
 Zbl = {0734.57010}
}

@inproceedings {Kac:reps,
    AUTHOR = {Kac, Victor},
     TITLE = {Representations of classical {L}ie superalgebras},
 BOOKTITLE = {Differential geometrical methods in mathematical physics, {II}
              ({P}roc. {C}onf., {U}niv. {B}onn, {B}onn, 1977)},
    SERIES = {Lecture Notes in Math.},
    VOLUME = {676},
     PAGES = {597--626},
 PUBLISHER = {Springer, Berlin},
      YEAR = {1978},
   MRCLASS = {17B70 (81E10)},
  MRNUMBER = {519631},
MRREVIEWER = {H. de Vries},
}

@article{GLN:genus-for-Lie-superalgebras,
 Author = {Garoufalidis, Stavros and {\relax L}{\'o}pez Neumann, Daniel},
 Title = {Genus bounds for knot polynomials of {L}ie superalgebras},
 Year = {2026},
 Journal = {	arXiv:2607.11735},
}

@article{LNV:plumbing-multiplicative,
 Author = {Daniel L\'opez Neumann and Roland van der Veen},
 Title = {A plumbing-multiplicative function from the {Links}-{Gould} invariant},
 Year = {2025},
 Journal = { {arXiv}:2502.12899},
 URL = {https://arxiv.org/abs/2502.12899},
 arXiv = {arXiv:2502.12899}
}

@book{Turaev:BOOK1,
	Author = {Turaev, V. G.},
	Isbn = {3-11-013704-6},
	Mrclass = {57M25 (16W30 17B37 57N10 81R50)},
	Mrreviewer = {Louis H. Kauffman},
	Pages = {x+588},
	Publisher = {Walter de Gruyter \& Co., Berlin},
	Series = {De Gruyter Studies in Mathematics},
	Title = {Quantum invariants of knots and 3-manifolds},
	Url = {https://mathscinet-ams-org.rproxy.sc.univ-paris-diderot.fr/mathscinet-getitem?mr=1292673},
	Volume = {18},
	Year = {1994}}

\end{document}